\providecommand{\U}[1]{\protect\rule{.1in}{.1in}}
\providecommand{\U}[1]{\protect \rule{.1in}{.1in}}
\providecommand{\U}[1]{\protect \rule{.1in}{.1in}}
\newtheorem{theorem}{Theorem}
\theoremstyle{plain}
\newtheorem{corollary}{Corollary}
\newtheorem{definition}{Definition}
\newtheorem{example}{Example}
\newtheorem{lemma}{Lemma}
\newtheorem{proposition}{Proposition}
\numberwithin{equation}{section}
\begin{document}
\title{\textit{Nearness Rings on Nearness Approximation Spaces*}}
\thanks{* This paper is a part of Ebubekir \.{I}nan's PhD thesis which approved on 20.03.2015 by \.{I}n\"{o}n\"{u} University Graduate School of Natural and Applied Sciences, T\"{u}rkiye.}
\author{MEHMET AL\.{I} \"{O}ZT\"{U}RK}
\address{{\small \textit{Department of Mathematics}}\\
{\small \textit{Faculty of Arts and Sciences}}\\
{\small \textit{Ad\i yaman University}}\\
{\small \textit{Ad\i yaman, T\"{u}rkiye}}}
\email{maozturk@adiyaman.edu.tr}

\author{EBUBEK\.{I}R \.{I}NAN}
\address{{\small \textit{Department of Mathematics}}\\
{\small \textit{Faculty of Arts and Sciences}}\\
{\small \textit{Ad\i yaman University}}\\
{\small \textit{Ad\i yaman, T\"{u}rkiye}}}
\email{einan@adiyaman.edu.tr}

\subjclass[2010]{ 03E75, 03E99, 20A05, 20E99}
\keywords{Near set, rough set, approximation space, nearness approximation space, near group}

\begin{abstract}
In this paper, we consider the problem of how to establish algebraic
structures on nearness approximation spaces. Essentially, our approach is to
define the nearness ring, nearness ideal and nearness ring of all weak cosets
by considering new operations on the set of all weak cosets. Afterwards, our
aim is to study nearness homomorphism on nearness approximation spaces, and to
investigate some properties of nearness rings and ideals. 
\end{abstract}

\maketitle

\section{Introduction}

Nearness approximation spaces and near sets were introduced in 2007 as a
generalization of rough set theory \cite{Peters1, Peters}. More recent work
consider generalized approach theory in the study of the nearness of non-empty
sets that resemble each other \cite{Peters5} and a topological framework for
the study of nearness and apartness of sets \cite{Naimpally}. An algebraic
approach of rough sets has been given by Iwinski \cite{Iwinski}. Afterwards,
rough subgroups were introduced by Biswas and Nanda \cite{Biswas}. In 2004 Davvaz investigated the concept of roughness of rings
\cite{Davvaz2004} (and other algebraic approaches of rough sets in
\cite{Yamak, Rasouli}).

Near set theory begins with the selection of probe functions that provide a
basis for describing and discerning affinities between objects in distinct
perceptual granules. A probe function is a real-valued function representing a
feature of physical objects such as images or collections of artificial
organisms, e.g. robot societies.

In the concept of ordinary algebraic structures, such a structure that
consists of a nonempty set of abstract points with one or more binary
operations, which are required to satisfy certain axioms. For example, a
groupoid is an algebraic structure $\left(  A,\circ\right)  $ consisting of a
nonempty set $A$ and a binary operation \textquotedblleft$\circ$%
\textquotedblright\ defined on $A$ \cite{Clifford1961}. In a groupoid, the
binary operation \textquotedblleft$\circ$\textquotedblright\ must be only
closed in $A$, i.e., for all $a,b$ in $A$, the result of the operation $a\circ
b$ is also in $A$. As for the nearness approximation space, the sets are
composed of perceptual objects (non-abstract points) instead of abstract
points. Perceptual objects are points that have features. And these points
describable with feature vectors in nearness approximation spaces
\cite{Peters}. Upper approximation of a nonempty set is obtained by using the
set of objects composed by the nearness approximation space together with
matching objects. In the algebraic structures constructed on nearness
approximation spaces, the basic tool is consideration of upper approximations
of the subsets of perceptual objects. In a groupoid $A$ on nearness
approximation space, the binary operation \textquotedblleft$\circ
$\textquotedblright\ may be closed in upper approximation of $A$, i.e., for
all $a,b$ in $A$, $a\circ b$ is in upper approximation of $A$.

There are two important differences between ordinary algebraic structures and
nearness algebraic structures. The first one is working with non-abstract
points while the second one is considering of upper approximations of the
subsets of perceptual objects for the closeness of binary operations.

In 2012, E. \.{I}nan and M. A. \"{O}zt\"{u}rk \cite{Inan, Inan2014}
investigated the concept of near groups on nearness approximation spaces.
Moreover, in 2013, M. A. \"{O}zt\"{u}rk at all \cite{Ozturk2013} introduced
near group of weak cosets on nearness approximation spaces. And in 2015, E. \.{I}nan and M. A. \"{O}zt\"{u}rk \cite{Inansemi}
investigated the nearness semigroups.  In this paper, we
consider the problem of how to establish and improve algebraic structures of
nearness approximation spaces. Essentially, our aim is to obtain algebraic
structures such as nearness rings using sets and operations that ordinary are
not being algebraic structures. Moreover, we define the nearness ring of all
weak cosets by considering operations on the set of all weak cosets. To define
this quotient structure we don't need to consider ideals.

\section{Preliminaries}

\subsection{Nearness Approximation Spaces \cite{Peters}}

Perceptual objects are points that describable with feature vectors. Let
$\mathcal{O}$ be a set of perceptual objects. An object description is defined
by means of a tuple of function values $\Phi\left(  x\right)  $ associated
with an object $x\in X\subseteq\mathcal{O}$. The important thing to notice is
the choice of functions $\varphi_{i}\in B$ used to describe any object of
interest. Assume that $B\subseteq%
\mathcal{F}%
$ is a given set of functions representing features of sample objects
$X\subseteq\mathcal{O}$. Let $\varphi_{i}\in B$, where $\varphi_{i}%
:\mathcal{O}\longrightarrow%
\mathbb{R}
$. In combination, the functions representing object features provide a basis
for an object description $\Phi:\mathcal{O}\longrightarrow%
\mathbb{R}
^{L}$, a vector containing measurements (returned values) associated with each
functional value $\varphi_{i}\left(  x\right)  $, where the description length
is $\left\vert \Phi\right\vert =L$.

\small

Object Description: $\Phi\left(  x\right)  =\left(  \varphi_{1}\left(
x\right)  ,\varphi_{2}\left(  x\right)  ,\varphi_{3}\left(  x\right)
,...,\varphi_{i}\left(  x\right)  ,...,\varphi_{L}\left(  x\right)  \right)  $.

\normalsize
Sample objects $X\subseteq\mathcal{O}$ are near to each other if and only if
the objects have similar descriptions. Recall that each $\varphi$ defines a
description of an object. Then let $\Delta_{\varphi_{i}}$ denote 
$\Delta_{\varphi_{i}}=\left\vert \varphi_{i}\left(  x^{\prime}\right)
-\varphi_{i}\left(  x\right)  \right\vert $, where $x,x^{\prime}\in\mathcal{O}$. The difference $\Delta_{\varphi}$ leads to a definition of the indiscernibility relation \textquotedblleft$\sim_{B}%
$\textquotedblright.

Let $x,x^{\prime}\in\mathcal{O}$, $B\subseteq \mathcal{F}$.

\[
\begin{tabular}
[c]{l}%
$\sim_{B}=\left\{  \left(  x,x^{\prime}\right)  \in\mathcal{O}\times
\mathcal{O}\mid\forall\varphi_{i}\in B\text{ },\text{ }\Delta_{\varphi_{i}%
}=0\right\}  $%
\end{tabular}
\]

is called the indiscernibility relation on $\mathcal{O}$, where description
length $i\leq\left\vert \Phi\right\vert $.%

\[%
\begin{array}
[c]{c|l}%
Symbol & Interpretation\\\hline
B & B\subseteq%
\mathcal{F}%
\text{,}\\
r & \binom{\left\vert B\right\vert }{r}\text{, i.e. , }\left\vert B\right\vert
\text{ probe functions }\varphi_{i}\in B\text{ taken }r\text{ at a time,}\\
B_{r} & r\leq\left\vert B\right\vert \text{ probe functions in }B\text{,}\\
\sim_{B_{r}} & \text{Indiscernibility relation defined using }B_{r}\text{,}\\
\left[  x\right]  _{B_{r}} & \left[  x\right]  _{B_{r}}=\left\{  x^{\prime}%
\in\mathcal{O}\mid x\sim_{B_{r}}x^{\prime}\right\}  \text{, equivalence
(nearness) class,}\\
\mathcal{O}\diagup\sim_{B_{r}} & \mathcal{O}\diagup\sim_{B_{r}}=\left\{
\left[  x\right]  _{B_{r}}\mid x\in\mathcal{O}\right\}  \text{, quotient
set,}\\
\xi_{\mathcal{O},B_{r}} & \text{Partition }\xi_{\mathcal{O},B_{r}}%
=\mathcal{O}\diagup\sim_{B_{r}}\text{,}\\
N_{r}\left(  B\right)  & N_{r}\left(  B\right)  =\left\{  \xi_{\mathcal{O}%
,B_{r}}\mid B_{r}\subseteq B\right\}  \text{, set of partitions,}\\
{\nu}_{N_{r}} & {\nu}_{N_{r}}:\wp\left(  \mathcal{O}\right)
\times\wp\left(  \mathcal{O}\right)  \longrightarrow\left[  0,1\right]
\text{, overlap function,}\\
N_{r}\left(  B\right)  _{\ast}X & N_{r}\left(  B\right)  _{\ast}%
X=\bigcup_{\left[  x\right]  _{B_{r}}\subseteq X}\left[  x\right]  _{B_{r}%
}\text{, lower approximation,}\\
N_{r}\left(  B\right)  ^{\ast}X & N_{r}\left(  B\right)  ^{\ast}%
X=\bigcup_{\left[  x\right]  _{B_{r}}\cap X\neq\varnothing}\left[  x\right]
_{B_{r}}\text{, upper approximation,}\\
 Bnd_{N_{r}\left(  B\right)  }\left(  X\right)  & N_{r}\left(  B\right)
^{\ast}X\diagdown N_{r}\left(  B\right)  _{\ast}X=\left\{  x\in N_{r}\left(
B\right)  ^{\ast}X\mid x\notin N_{r}\left(  B\right)  _{\ast}X\right\}.%
\end{array}
\]

\begin{center}
\bigskip Table 1 : Nearness Approximation Space Symbols
\end{center}

\normalsize
A nearness approximation space is a tuple $NAS=\left(  \mathcal{O},%
\mathcal{F}%
,\sim_{B_{r}},N_{r}(B),{\nu}_{N_{r}}\right)  $ where the approximation
space $NAS$ is defined with a set of perceived objects $\mathcal{O}$, set of
probe functions $%
\mathcal{F}%
$\ representing object features, indiscernibility relation $\sim_{B_{r}\text{
}}$defined relative to $B_{r}\subseteq B\subseteq%
\mathcal{F}%
$, collection of partitions (families of neighbourhoods) $N_{r}\left(
B\right)  $, and overlap function ${\nu}_{N_{r}}$. The subscript $r$
denotes the cardinality of the restricted subset $B_{r}$, where we consider
$\binom{\left\vert B\right\vert }{r}$, i.e., $\left\vert B\right\vert $
functions $\phi_{i}\in%
\mathcal{F}%
$ taken $r$ at a time to define the relation $\sim_{B_{r}\text{ }}$. This
relation defines a partition of $\mathcal{O}$\ into non-empty, pairwise
disjoint subsets that are equivalence classes denoted by $\left[  x\right]
_{B_{r}}$, where $\left[  x\right]  _{B_{r}}=\left\{  x^{\prime}\in
\mathcal{O}\mid x\sim_{B_{r}}x^{\prime}\right\}  $. These classes form a new
set called the quotient set $\mathcal{O}\diagup\sim_{B_{r}}$, where
$\mathcal{O}\diagup\sim_{B_{r}}=\left\{  \left[  x\right]  _{B_{r}}\mid
x\in\mathcal{O}\right\}  $. In effect, each choice of probe functions $B_{r}%
$\ defines a partition $\xi_{\mathcal{O},B_{r}}$\ on a set of objects
$\mathcal{O}$, namely, $\xi_{\mathcal{O},B_{r}}=\mathcal{O}\diagup\sim_{B_{r}%
}$. Every choice of the set $B_{r}$ leads to a new partition of $\mathcal{O}$.
Let $%
\mathcal{F}%
$ denote a set of features for objects in a set $X$, where each $\phi_{i}\in%
\mathcal{F}%
$ that maps $X$ to some value set $V_{\phi_{i}}$ (range of $\phi_{i}$). The
value of $\phi_{i}\left(  x\right)  $ is a measurement associated with a
feature of an object $x\in X$. The overlap function ${\nu}_{N_{r}}$ is
defined by ${\nu}_{N_{r}}:\wp\left(  \mathcal{O}\right)  \times
\wp\left(  \mathcal{O}\right)  \longrightarrow\left[  0,1\right]  $, where
$\wp\left(  \mathcal{O}\right)  $ is the powerset of $\mathcal{O}$. The
overlap function ${\nu}_{N_{r}}$ maps a pair of sets to a number in
$\left[  0,1\right]  $\ representing the degree of overlap between sets of
objects with their features defined by probe functions $B_{r}\subseteq B$
\cite{Skowron}. For each subset $B_{r}\subseteq B$ of probe functions, define
the binary relation $\sim_{B_{r}}=\left\{  \left(  x,x^{\prime}\right)
\in\mathcal{O}\times\mathcal{O}\mid\forall\phi_{i}\in B_{r},\text{ }\phi
_{i}\left(  x\right)  =\phi_{i}\left(  x^{\prime}\right)  \right\}  $. Since
each $\sim_{B_{r}}$\ is, in fact, the usual indiscernibility relation, for
$B_{r}\subseteq B$\ and $x\in\mathcal{O}$, let $\left[  x\right]  _{B_{r}}$
denote the equivalence class containing $x$. If $\left(  x,x^{\prime
}\right)  \in\sim_{B_{r}\text{ }}$, then $x$ and $x^{\prime}$ are said to be
$B$-indiscernible with respect to all feature probe functions in $B_{r}$. Then
define a collection of partitions $N_{r}\left(  B\right)  $, where
$N_{r}\left(  B\right)  =\left\{  \xi_{\mathcal{O},B_{r}}\mid B_{r}\subseteq
B\right\}  $.

\subsection{Descriptively Near Sets}

We need the notion of nearness between sets, and so we consider the concept of
the descriptively near sets. In 2007, descriptively near sets were introduced
as a means of solving classification and pattern recognition problems arising
from disjoint sets that resemble each other \cite{Peters, Peters1}.

A set of objects $A\subseteq\mathcal{O}$ is characterized by the unique
description of each object in the set.

Set Description\textbf{: }\cite{Naimpally} Let $\mathcal{O}$ be a set of
perceptual objects, $\Phi$ an object description and $A\subseteq\mathcal{O}$.
Then the \textit{set description} of $A$ is defined as%

\[
\mathcal{Q}(A)=\{\Phi(a)\mid a\in A\}.
\]

Descriptive Set Intersection\textbf{: }\cite{Naimpally, Peters6} Let
$\mathcal{O}$ be a set of perceptual objects, $A$ and $B$ any two subsets of
$\mathcal{O}$. Then the descriptive (set) intersection of $A$ and $B$ is
defined as%

\[
A\underset{\Phi}{\cap}B=\left\{  x\in A\cup B\mid\Phi\left(  x\right)
\in\mathcal{Q}\left(  A\right)  \text{ }and\text{ }\Phi\left(  x\right)
\in\mathcal{Q}\left(  B\right)  \right\}  \text{.}%
\]

If $\mathcal{Q}(A)\cap\mathcal{Q}(B)\neq\emptyset$, then $A$ is called
descriptively near $B$ and denoted by $A\delta_{\Phi}B$ \cite{Peters0}.

Descriptive Nearness Collections\textbf{:} \cite{Peters0} $\xi_{\Phi}\left(
A\right)  =\left\{  B\in\mathcal{P}\left(  \mathcal{O}\right)  \mid
A\delta_{\Phi}B\right\}  $.

Let $\Phi$ be an object description, $A$ any subset of $\mathcal{O}$ and
$\xi_{\Phi}\left(  A\right)  $ a descriptive nearness collections. Then
$A\in\xi_{\Phi}\left(  A\right)  $ \cite{Peters0}.

\subsection{Some Algebraic Structures on NAS}

A \textit{binary operation} on a set $G$ is a mapping of $G\times G$ into $G$,
where $G\times G$ is the set of all ordered pairs of elements of $G$. A
\textit{groupoid} is a system $G\left(  \cdot\right)  $ consisting of a
nonempty set $G$ together with a binary operation \textquotedblleft$\cdot
$\textquotedblright\ on $G$ \cite{Clifford1961}.

Let $NAS=\left(  \mathcal{O},%
\mathcal{F}%
,\sim_{B_{r}},N_{r}\left(  B\right),{\nu}_{N_{r}}\right)  $ be a nearness approximation
space $(NAS)$ and let \textquotedblleft$\cdot$\textquotedblright\ a binary
operation defined on $\mathcal{O}$. A subset $G$ of the set of perceptual
objects $\mathcal{O}$\ is called a \textit{near group on }$NAS$ if the
following properties are satisfied:

\begin{enumerate}
\item[$(NG_{1})$] For all $x,y\in G$, $x\cdot y\in N_{r}\left(  B\right)
^{\ast}G$,

\item[$(NG_{2})$] For all $x,y,z\in G$, $\left(  x\cdot y\right)  \cdot
z=x\cdot\left(  y\cdot z\right)  $ property\ holds in $N_{r}\left(  B\right)
^{\ast}G$,

\item[$(NG_{3})$] There exists $e\in N_{r}\left(  B\right)  ^{\ast}G$ such
that $x\cdot e=e\cdot x=x$ for all $x\in G$\emph{ }($e$ is called the near
identity element of $G$),

\item[$(NG_{4})$] There exists $y\in G$ such that $x\cdot y=y\cdot x=e$ for
all $x\in G$ ($y$ is called the near inverse of $x$ in $G$ and denoted as
$x^{-1}$) \cite{Inan}.
\end{enumerate}

If in addition, for all $x,y\in G$, $x\cdot y=y\cdot x$ property holds in
$N_{r}\left(  B\right)  ^{\ast}G$, then $G$ is said to be an abelian near
group on $NAS$.

Also, a nonempty subset $S\subseteq\mathcal{O}$ is called a \textit{near
semigroup} on $NAS$ if $x\cdot y\in N_{r}\left(  B\right)  ^{\ast}S$ for all
$x,y\in S$ and $\left(  x\cdot y\right)  \cdot z=x\cdot\left(  y\cdot
z\right)  $ for all $x,y,z\in S$ property\ holds in $N_{r}\left(  B\right)
^{\ast}\left(  S\right)  $.

\begin{theorem}
\cite{Inan}\label{Th01}Let $G$ be a near group on $NAS$.

(i) There exists a unique near identity element $e\in N_{r}\left(  B\right)
^{\ast}G$ such that $x\cdot e=x=e\cdot x$ for all $x\in G$.

(ii) For all $x\in G$, there exists a unique $y\in G$ such that $x\cdot
y=e=y\cdot x$.
\end{theorem}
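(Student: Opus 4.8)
The plan is to imitate the classical uniqueness arguments for the identity and the inverse in an ordinary group, while keeping careful track of the fact that ``$\cdot$'' is defined on all of $\mathcal{O}$, so that products such as $e\cdot e'$ are meaningful elements of $\mathcal{O}$ even when the near identity fails to lie in $G$ itself. The two parts are of genuinely different difficulty: part (ii) can be carried out entirely inside $G$, whereas part (i) is where the subtlety of the nearness setting appears.

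For (ii) I would argue exactly as in group theory. Suppose $y,y'\in G$ both satisfy $x\cdot y=y\cdot x=e$ and $x\cdot y'=y'\cdot x=e$. Then I would write
\[
y=y\cdot e=y\cdot(x\cdot y')=(y\cdot x)\cdot y'=e\cdot y'=y'.
\]
The first and last equalities are $(NG_{3})$ applied to the elements $y,y'\in G$; the middle equality is $(NG_{2})$ for the triple $(y,x,y')$, which is legitimate precisely because all three factors lie in $G$; and the remaining steps are the defining relations of the two inverses. Although $e$ occurs in the chain, neither the identity law nor associativity is ever invoked \emph{against} $e$, so nothing beyond the stated axioms is used.

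For (i) I would start from two elements $e,e'\in N_{r}(B)^{\ast}G$, each satisfying $x\cdot e=e\cdot x=x$ and $x\cdot e'=e'\cdot x=x$ for all $x\in G$, and aim for the classical chain $e=e\cdot e'=e'$. The immediate difficulty is that $e$ and $e'$ need not belong to $G$: $(NG_{3})$ only gives $e'\cdot x=x$ for $x\in G$, so I cannot directly read off $e\cdot e'=e'$, nor $e\cdot e'=e$. My way around this is to use that $G$ is nonempty, fix $a\in G$ together with its inverse $a^{-1}\in G$ from $(NG_{4})$, and \emph{represent the near identity as a product of genuine $G$-elements}, $e=a\cdot a^{-1}$. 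Then $e\cdot e'=(a\cdot a^{-1})\cdot e'=a\cdot(a^{-1}\cdot e')=a\cdot a^{-1}=e$, using $a^{-1}\cdot e'=a^{-1}$; and, representing $e'$ through its own inverses as $e'=c\cdot c^{-1}$ with $c,c^{-1}\in G$, I get $e\cdot e'=e\cdot(c\cdot c^{-1})=(e\cdot c)\cdot c^{-1}=c\cdot c^{-1}=e'$. Comparing the two evaluations yields $e=e'$.

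I expect the main obstacle to be legitimizing the associativity steps in (i). Re-associating $(a\cdot a^{-1})\cdot e'$ to $a\cdot(a^{-1}\cdot e')$ involves the triple $(a,a^{-1},e')$, one factor of which is the near identity $e'\in N_{r}(B)^{\ast}G$ rather than an element of $G$, so $(NG_{2})$ as quantified over $G$ does not literally cover it. The resolution I would adopt is to read associativity as valid for the relevant products \emph{inside} $N_{r}(B)^{\ast}G$ -- which is exactly where these products live -- or, failing that, to first establish that every near identity is expressible as $a\cdot a^{-1}$ with $a,a^{-1}\in G$ and then push all cancellations back onto $G$-triples. Pinning down this point, rather than the algebra itself, is the crux of part (i); part (ii), by contrast, is already clean.
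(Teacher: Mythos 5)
Your part (ii) is correct: every step of the chain $y=y\cdot e=y\cdot(x\cdot y')=(y\cdot x)\cdot y'=e\cdot y'=y'$ is licensed, because the identity law is only ever applied at $y,y'\in G$ and associativity only at the $G$-triple $(y,x,y')$. (For the record, the paper contains no proof of this theorem at all --- it is quoted from \cite{Inan} --- so your proposal must stand on its own.) The problem is part (i), and neither of the two repairs you offer closes the hole you yourself identified. First, ``reading associativity as valid for the relevant products inside $N_{r}(B)^{\ast}G$'' is an unlicensed strengthening of $(NG_{2})$: that axiom quantifies over $x,y,z\in G$, and the phrase ``holds in $N_{r}(B)^{\ast}G$'' only locates the products and the equality there; it does not assert associativity for triples having a factor in $N_{r}(B)^{\ast}G\setminus G$. (Moreover, even if you grant it, your chain still needs the representation $e'=c\cdot c^{-1}$ for the second evaluation, so this assumption alone would not finish the proof: both evaluations of $e\cdot e'$ would then return $e$.) Second, the representation of an arbitrary $(NG_{3})$-identity $e'$ as $c\cdot c^{-1}$ with $c\in G$ cannot be ``first established'': it is precisely the assertion that $(NG_{4})$ holds relative to $e'$, which does not follow from the identity property alone. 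Concretely, take $\mathcal{O}=\{a,e,e'\}$ with a single constant probe function (so $N_{r}(B)^{\ast}X=\mathcal{O}$ for every nonempty $X$), $G=\{a\}$, and define $a\cdot a=e$, $a\cdot e=e\cdot a=a$, $a\cdot e'=e'\cdot a=a$, all remaining products equal to $e$. All four near-group axioms hold with near identity $e$, yet $e'\neq e$ also satisfies the $(NG_{3})$ property and is not a product of elements of $G$. So under the literal ``$(NG_{3})$-only'' reading the uniqueness claim is simply false; the theorem is true only under the reading in which a near identity is one relative to which $(NG_{4})$ supplies inverses.

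Under that reading there is a short correct argument, and it is your own part (ii) computation redeployed so that the problematic product $e\cdot e'$ never appears. Fix $a\in G$, and let $y,z\in G$ satisfy $a\cdot y=y\cdot a=e$ and $a\cdot z=z\cdot a=e'$. Then
\[
y=y\cdot e'=y\cdot(a\cdot z)=(y\cdot a)\cdot z=e\cdot z=z\text{,}
\]
where the first equality is the identity property of $e'$ at $y\in G$, the third is $(NG_{2})$ for the $G$-triple $(y,a,z)$, and the last is the identity property of $e$ at $z\in G$. Hence $e=a\cdot y=a\cdot z=e'$. Every use of associativity and of the identity laws here is at elements of $G$, which is exactly the discipline your sketch was trying, but failing, to enforce: compare the two inverses of a fixed base point rather than trying to evaluate $e\cdot e'$.
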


\begin{theorem}
\cite{Inan}\label{Th02}Let $G$ be a near group on $NAS$.

(i) $\left(  x^{-1}\right)  ^{-1}=x$ for all $x\in G$.

(ii) If $x\cdot y\in G$, then $\left(  x\cdot y\right)  ^{-1}=y^{-1}\cdot
x^{-1}$ for all $x,y\in G$.

(iii) If either $x\cdot z=y\cdot z$ or $z\cdot x=z\cdot y$, then $x=y$ for all
$x,y,z\in G$.
\end{theorem}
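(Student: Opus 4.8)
The plan is to reduce each part to its ordinary group-theoretic analogue, but to monitor at every step that the associativity law $(NG_{2})$ is invoked only for triples of elements genuinely lying in $G$. This vigilance is essential because a product $a\cdot b$ of two elements of $G$ is guaranteed by $(NG_{1})$ only to lie in $N_{r}\left(B\right)^{\ast}G$, and neither $(NG_{2})$ nor the identity law $(NG_{3})$ is asserted for elements of $N_{r}\left(B\right)^{\ast}G$ outside $G$. The uniqueness statements of Theorem \ref{Th01} will be the tool that pins down inverses.

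For (i), I would observe that $(NG_{4})$ gives $x^{-1}\in G$ with $x\cdot x^{-1}=e=x^{-1}\cdot x$. Reading these same two equalities from the point of view of $x^{-1}$, they say precisely that $x$ is a near inverse of $x^{-1}$. Since $x\in G$ and the near inverse of $x^{-1}$ is unique in $G$ by Theorem \ref{Th01}(ii), I conclude $\left(x^{-1}\right)^{-1}=x$. No reassociation is required, so this part is immediate.

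For (iii), I would prove right cancellation by multiplying $x\cdot z=y\cdot z$ on the right by $z^{-1}\in G$. The point is that all three of $x,z,z^{-1}$ (respectively $y,z,z^{-1}$) lie in $G$, so $(NG_{2})$ applies directly and yields $x\cdot\left(z\cdot z^{-1}\right)=\left(x\cdot z\right)\cdot z^{-1}=\left(y\cdot z\right)\cdot z^{-1}=y\cdot\left(z\cdot z^{-1}\right)$; since $z\cdot z^{-1}=e$ and $(NG_{3})$ gives $x\cdot e=x$, $y\cdot e=y$, this forces $x=y$. Left cancellation is symmetric, multiplying on the left by $z^{-1}$.

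Part (ii) is where the real care is needed, and the hypothesis $x\cdot y\in G$ is exactly what rescues the argument. The naive route---verify $\left(x\cdot y\right)\cdot\left(y^{-1}\cdot x^{-1}\right)=e$ and quote uniqueness---is problematic, since $y^{-1}\cdot x^{-1}$ need not lie in $G$, so Theorem \ref{Th01}(ii) does not apply to it and $(NG_{2})$ cannot legitimately reassociate a fourfold product whose inner factors escape $G$. Instead I would start from the valid identity $\left(x\cdot y\right)^{-1}\cdot\left(x\cdot y\right)=e$ (valid because $x\cdot y\in G$ forces its near inverse to exist in $G$ by $(NG_{4})$) and multiply on the right first by $y^{-1}$, then by $x^{-1}$. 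At the first multiplication the triple being reassociated is $\left(x\cdot y\right)^{-1},\left(x\cdot y\right),y^{-1}$, all in $G$ thanks to $x\cdot y\in G$, together with the auxiliary triple $x,y,y^{-1}$; this collapses to $\left(x\cdot y\right)^{-1}\cdot x=y^{-1}$. At the second multiplication the triple is $\left(x\cdot y\right)^{-1},x,x^{-1}$, again all in $G$, giving $\left(x\cdot y\right)^{-1}=y^{-1}\cdot x^{-1}$. Crucially, the product $y^{-1}\cdot x^{-1}$ is never formed as an intermediate inside $G$; it appears only as the final right-hand side. The main obstacle is therefore purely this bookkeeping: arranging the computation so that every use of associativity and of the identity law stays within $G$, which is precisely what the hypothesis $x\cdot y\in G$ makes possible.
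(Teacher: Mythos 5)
Your proposal is correct, but note that there is nothing in the paper to compare it against: Theorem \ref{Th02} is stated with the citation \cite{Inan} and no proof is given in this paper, so your argument is a reconstruction of the result from the prior work. Your reconstruction is sound. Part (i) correctly exhibits $x$ as a two-sided near inverse of $x^{-1}$ and appeals to the uniqueness in Theorem \ref{Th01}(ii), which is applicable precisely because $x\in G$. Part (iii) multiplies by $z^{-1}$ and reassociates only the triples $x,z,z^{-1}$ and $y,z,z^{-1}$ (respectively $z^{-1},z,x$ and $z^{-1},z,y$), all inside $G$, and uses $(NG_{3})$ only on elements of $G$. Part (ii) uses the hypothesis $x\cdot y\in G$ exactly where it is needed: it guarantees $\left(x\cdot y\right)^{-1}$ exists in $G$ via $(NG_{4})$, so every associativity step involves a triple from $G$, and $y^{-1}\cdot x^{-1}$ appears only as the final right-hand side, never as an intermediate element on which $(NG_{2})$ or $(NG_{3})$ would have to act.

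One small inaccuracy in your side remark about the naive route: the reassociation $\left(x\cdot y\right)\cdot\left(y^{-1}\cdot x^{-1}\right)=\left(\left(x\cdot y\right)\cdot y^{-1}\right)\cdot x^{-1}$ \emph{is} a legitimate instance of $(NG_{2})$, applied to the triple $x\cdot y,\,y^{-1},\,x^{-1}$, all of which lie in $G$ under the hypothesis $x\cdot y\in G$; so one can in fact show that $y^{-1}\cdot x^{-1}$ is a two-sided inverse of $x\cdot y$. The genuine obstruction to the naive route is only the first one you named: Theorem \ref{Th01}(ii) asserts uniqueness of the inverse among elements of $G$, while $y^{-1}\cdot x^{-1}$ is a priori only known to lie in $N_{r}\left(B\right)^{\ast}G$, and the usual argument identifying two two-sided inverses fails here because it would require $(NG_{2})$ and $(NG_{3})$ to act on $y^{-1}\cdot x^{-1}$ itself. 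Your direct derivation sidesteps this, which is why it works.
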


$H$\ is called a subnear group of near group $G$ if $H$ is a near group
relative to the operation in $G$. There is only one guaranteed trivial subnear
group of near group $G$, i.e., $G$ itself. Moreover, $\left\{  e\right\}  $ is
a trivial subnear group of near group $G$ if and only if $e\in G$.

\begin{theorem}
\cite{Inan2014}\label{Th001} Let $G$ be a near group on nearness approximation
space, $H$ be a nonempty subset of $G$ and $N_{r}\left(  B\right)  ^{\ast}H$
be a groupoid. $H\subseteq G$ is a subnear group of $G$ if and only if
$x^{-1}\in H$ for all $x\in H$.
\end{theorem}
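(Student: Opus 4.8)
The plan is to prove the two implications separately, treating $(\Leftarrow)$ as the substantive verification and $(\Rightarrow)$ as an identification of inverses. Throughout I will use the elementary fact that $H\subseteq N_{r}(B)^{\ast}H$: every $h\in H$ lies in its own class $\left[h\right]_{B_{r}}$, which meets $H$, so $\left[h\right]_{B_{r}}\subseteq N_{r}(B)^{\ast}H$ and hence $h\in N_{r}(B)^{\ast}H$. Consequently any product of elements of $H$ is a product of elements of the groupoid $N_{r}(B)^{\ast}H$, and closure under $\cdot$ is available.

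For the direction $(\Leftarrow)$, assume $x^{-1}\in H$ for every $x\in H$, and I will check $(NG_{1})$--$(NG_{4})$ for $H$. Closure $(NG_{1})$ is immediate: for $x,y\in H\subseteq N_{r}(B)^{\ast}H$ the groupoid hypothesis gives $x\cdot y\in N_{r}(B)^{\ast}H$. Associativity $(NG_{2})$ is inherited from $G$, since for $x,y,z\in H\subseteq G$ the identity $(x\cdot y)\cdot z=x\cdot(y\cdot z)$ already holds in $N_{r}(B)^{\ast}G$, while all intermediate products land in $N_{r}(B)^{\ast}H$ by closure. For $(NG_{3})$ I claim the near identity $e$ of $G$ serves as identity of $H$: picking any $x\in H$ (possible since $H\neq\varnothing$), the hypothesis gives $x^{-1}\in H$, so $e=x\cdot x^{-1}\in N_{r}(B)^{\ast}H$ by the groupoid property, and $x\cdot e=e\cdot x=x$ for all $x\in H$ because it already holds for all $x\in G$. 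Finally $(NG_{4})$ holds by taking $y=x^{-1}\in H$. Hence $H$ is a near group relative to the operation of $G$.

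For the direction $(\Rightarrow)$, assume $H$ is a subnear group, so $H$ is itself a near group with some near identity $e_{H}\in N_{r}(B)^{\ast}H$ and, by $(NG_{4})$ applied to $H$, an inverse $\bar{x}\in H$ for each $x\in H$ with $x\cdot\bar{x}=\bar{x}\cdot x=e_{H}$. The goal is to identify $\bar{x}$ with the near inverse $x^{-1}$ of $x$ in $G$. The key reduction is the equality $e_{H}=e$: once this is known, $x\cdot\bar{x}=e=x\cdot x^{-1}$ with $x,\bar{x},x^{-1}\in G$, so the cancellation law Theorem \ref{Th02}(iii) yields $\bar{x}=x^{-1}$, whence $x^{-1}=\bar{x}\in H$.

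The main obstacle is precisely this equality $e_{H}=e$. From $x\cdot e_{H}=x=x\cdot e$ for $x\in H$ one would like to cancel $x$ via Theorem \ref{Th02}(iii); the difficulty is that the cancellation law, like associativity $(NG_{2})$ and the identity law $(NG_{3})$, is only guaranteed for elements of $G$, whereas a priori $e_{H},e\in N_{r}(B)^{\ast}G$ need not lie in $G$ itself. The plan is therefore to first locate these near identities inside $G$ --- for instance by left-multiplying $x\cdot e_{H}=x\cdot e$ by $x^{-1}\in G$ and using associativity to reduce the comparison to $e\cdot e_{H}=e\cdot e$, so that the identity law applies and forces $e_{H}=e$. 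Managing this upper-approximation bookkeeping, rather than the algebra itself, is where the real work of the forward implication lies.
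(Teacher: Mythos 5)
First, a point of comparison: this theorem is stated in the paper without proof --- it is quoted from \cite{Inan2014} --- so there is no in-paper argument to measure you against; I review the proposal on its own terms. Your $(\Leftarrow)$ direction is correct and complete, and it is the substantive half: the observation $H\subseteq N_{r}\left( B\right) ^{\ast}H$, closure of products from the groupoid hypothesis, associativity inherited from $G$, the identification $e=x\cdot x^{-1}\in N_{r}\left( B\right) ^{\ast}H$ (which is exactly where the groupoid hypothesis earns its keep), and $(NG_{4})$ by hypothesis are precisely the points that need checking.

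The forward direction, however, has a genuine gap, which you name but do not close. Your reduction is fine up to the claim $e_{H}=e$: once that equality is known, cancellation (Theorem \ref{Th02}(iii)) legitimately applies because $x$, $\bar{x}$, $x^{-1}$ all lie in $G$. But the sketched proof of $e_{H}=e$ cannot be carried out: rewriting $x^{-1}\cdot\left( x\cdot e_{H}\right)$ as $\left( x^{-1}\cdot x\right) \cdot e_{H}$ invokes $(NG_{2})$ for the triple $\left( x^{-1},x,e_{H}\right)$, and concluding anything from $e\cdot e_{H}=e\cdot e$ invokes $(NG_{3})$ for the elements $e_{H}$ and $e$; both axioms are guaranteed only for elements of $G$, while $e_{H}$ and $e$ lie merely in the upper approximations. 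This is not ``bookkeeping'' that can be managed --- within the stated axioms there is no mechanism for moving $e_{H}$ or $e$ into $G$, so the comparison of the two identities is not provable along these lines. The intended escape is definitional: in this paper and in \cite{Inan2014}, a subnear group of $G$ is understood to be a near group relative to the operation of $G$ \emph{with the same near identity} $e$ (this convention is visible in the remark that $\left\{ e\right\}$ is a trivial subnear group iff $e\in G$, and in the weak coset relation $a^{-1}\cdot b\in H\cup\left\{ e\right\}$). Under that reading the forward direction needs no identity comparison at all: $(NG_{4})$ for $H$ gives, for each $x\in H$, some $y\in H$ with $x\cdot y=y\cdot x=e$, and Theorem \ref{Th01}(ii) (uniqueness of the near inverse in $G$) forces $y=x^{-1}$, hence $x^{-1}\in H$. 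You should either adopt that convention explicitly or flag that, with the weaker reading of ``subnear group,'' the forward implication is not recoverable from the axioms as stated.
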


Let $H_{1}$ and $H_{2}$ be two near subgroups of \ the near group $G$ and
$N_{r}\left(  B\right)  ^{\ast}H_{1}$, $N_{r}\left(  B\right)  ^{\ast}H_{2}$
groupoids. If $\left(  N_{r}\left(  B\right)  ^{\ast}H_{1}\right)  \cap\left(
N_{r}\left(  B\right)  ^{\ast}H_{2}\right)  =N_{r}\left(  B\right)  ^{\ast
}\left(  H_{1}\cap H_{2}\right)  $, then $H_{1}\cap H_{2}$ is a near subgroup
of near group $G$ \cite{Inan2014}.

Let $G\subset\mathcal{O}$ be a near group and $H$ be a subnear group of $G$.
The left weak equivalence relation (compatible relation) \textquotedblleft%
$\sim_{L}$\textquotedblright\ defined as%
\[
a\sim_{L}b:\Leftrightarrow a^{-1}\cdot b\in H\cup\left\{  e\right\}  \text{.}%
\]

A weak class defined by relation \textquotedblleft$\sim_{L}$\textquotedblright%
\ is called left weak coset. The left weak coset that contains the element $a$
is denoted by $\tilde{a}_{L}$, i.e.%
\[
\tilde{a}_{L}=\left\{  a\cdot h\mid h\in H,\text{ }a\in G,\text{ }a\cdot h\in
G\right\}  \cup\left\{  a\right\}  =aH\text{.}%
\]

Let $\left(  \mathcal{O}_{1},%
\mathcal{F}%
_{1},\sim_{B_{r_{1}}},N_{r_{1}}\left(  B\right),{\nu}_{N_{r_{1}}}\right)  $ and
$\left(  \mathcal{O}_{2},%
\mathcal{F}%
_{2},\sim_{B_{r_{2}}},N_{r_{2}}\left(  B\right),{\nu}_{N_{r_{2}}}\right)  $ be two
nearness approximation spaces and \textquotedblleft$\cdot$\textquotedblright,
\textquotedblleft$\circ$\textquotedblright\ binary operations over
$\mathcal{O}_{1}$ and $\mathcal{O}_{2}$, respectively.

Let $G_{1}\subset\mathcal{O}_{1}$, $G_{2}\subset\mathcal{O}_{2}$ be two near
groups and $\sigma$ a mapping from $N_{r_{1}}\left(  B\right)  ^{\ast}G_{1}$
onto $N_{r_{2}}\left(  B\right)  ^{\ast}G_{2}$. If $\sigma\left(  x\cdot
y\right)  =\sigma\left(  x\right)  \circ\sigma\left(  y\right)  $ for all
$x,y\in G_{1}$, then $\sigma$ is called a near homomorphism and also, $G_{1}$
is called near homomorphic to $G_{2}$.

Let $G_{1}\subset\mathcal{O}_{1}$, $G_{2}\subset\mathcal{O}_{2}$ be near
homomorphic groups, $H_{1}$ a near subgroup and $N_{r_{1}}\left(  B\right)
^{\ast}H_{1}$ a groupoid. If $\sigma\left(  N_{r_{1}}\left(  B\right)  ^{\ast
}H_{1}\right)  =N_{r_{2}}\left(  B\right)  ^{\ast}\sigma\left(  H_{1}\right)
$, then $\sigma\left(  H_{1}\right)  $ is a near subgroup of $G_{2}$
\cite{Inan2014}.

The kernel of $\sigma$\ is defined to be the set $Ker\sigma=\left\{  x\in
G_{1}\mid\sigma\left(  x\right)  =e^{\prime}\right\}  $, where $e^{\prime}$ is
the near identity element of $G_{2}$.

\begin{theorem}
\cite{Inan2014}\label{th002}Let $G_{1}\subset\mathcal{O}_{1}$,$G_{2}%
\subset\mathcal{O}_{2}$ be near groups that are near homomorphic,
$Ker\sigma=N$ be near homomorphism kernel and $N_{r}\left(  B\right)  ^{\ast
}N$ be a groupoid. Then $N$ is a near normal subgroup of $G_{1}$.
\end{theorem}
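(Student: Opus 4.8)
The plan is to imitate the classical argument that the kernel of a group homomorphism is a normal subgroup, while carefully tracking where each product, identity and inverse actually lives, since in the near setting these objects sit in the upper approximation $N_{r_1}\left(B\right)^{\ast}G_1$ rather than in $G_1$ itself. The argument splits into two tasks: first showing that $N=Ker\sigma$ is a subnear group of $G_1$, and then verifying the normality condition. For the first task I would appeal directly to Theorem \ref{Th001}: since $N_r\left(B\right)^{\ast}N$ is assumed to be a groupoid and $N\subseteq G_1$, it suffices to check that $N$ is nonempty and that $x^{-1}\in N$ whenever $x\in N$.

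Before that I would record two homomorphism facts. First, $\sigma\left(e\right)=e^{\prime}$: from $e\cdot e=e$ (a consequence of $(NG_3)$) one gets the idempotent relation $\sigma\left(e\right)\circ\sigma\left(e\right)=\sigma\left(e\right)=e^{\prime}\circ\sigma\left(e\right)$ in $G_2$, and right-cancelling $\sigma\left(e\right)$ via Theorem \ref{Th02}(iii) forces $\sigma\left(e\right)=e^{\prime}$. Second, $\sigma\left(x^{-1}\right)=\sigma\left(x\right)^{-1}$ for $x\in G_1$: applying $\sigma$ to the relations $x\cdot x^{-1}=e=x^{-1}\cdot x$ (valid in $N_{r_1}\left(B\right)^{\ast}G_1$, with $x^{-1}\in G_1$ by $(NG_4)$) gives $\sigma\left(x\right)\circ\sigma\left(x^{-1}\right)=e^{\prime}=\sigma\left(x^{-1}\right)\circ\sigma\left(x\right)$, so by the uniqueness of near inverses in Theorem \ref{Th01}(ii) the element $\sigma\left(x^{-1}\right)$ is the near inverse of $\sigma\left(x\right)$. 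With $\sigma\left(e\right)=e^{\prime}$ in hand, nonemptiness of $N$ follows (it contains the near identity whenever that element already lies in $G_1$).

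Closure under inverses is then immediate: if $x\in N$ then $\sigma\left(x\right)=e^{\prime}$, hence $\sigma\left(x^{-1}\right)=\sigma\left(x\right)^{-1}=\left(e^{\prime}\right)^{-1}=e^{\prime}$ and $x^{-1}\in N$. Theorem \ref{Th001} now yields that $N$ is a subnear group of $G_1$. For normality I would take $g\in G_1$ and $n\in N$ and compute, working inside $N_{r_1}\left(B\right)^{\ast}G_1$ and using associativity from $(NG_2)$, that $\sigma\left(g\cdot n\cdot g^{-1}\right)=\sigma\left(g\right)\circ\sigma\left(n\right)\circ\sigma\left(g^{-1}\right)=\sigma\left(g\right)\circ e^{\prime}\circ\sigma\left(g\right)^{-1}=e^{\prime}$; thus each such conjugate lies in $N$, which is exactly the near normality condition (equivalently, the agreement of the left and right weak cosets of $N$).

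The main obstacle is the bookkeeping about where the operations are defined. The homomorphism identity $\sigma\left(x\cdot y\right)=\sigma\left(x\right)\circ\sigma\left(y\right)$ is only postulated for $x,y\in G_1$, whereas the identity $e$ and the inverses live in the upper approximation, and the conjugate $g\cdot n\cdot g^{-1}$ need not a priori sit in $G_1$. So the genuine work is to verify at each step that the relevant products remain in $N_{r_1}\left(B\right)^{\ast}G_1$ (respectively $N_{r_2}\left(B\right)^{\ast}G_2$), where the associativity and cancellation of Theorems \ref{Th01} and \ref{Th02} are available, and that the intermediate arguments to which $\sigma$ is applied genuinely belong to $G_1$. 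Establishing $\sigma\left(e\right)=e^{\prime}$ cleanly is the crux, precisely because $e$ itself need not belong to $G_1$, so this step may require either that the near identity lie in $G_1$ or a prior extension of the homomorphism property to the upper approximation.
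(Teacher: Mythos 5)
Since Theorem \ref{th002} is quoted in this paper only as a preliminary from \cite{Inan2014}, the paper itself contains no proof to compare against, so your proposal must stand on its own; as written, it does not. Your skeleton (kernel closed under inversion, hence a subnear group by Theorem \ref{Th001}, then a normality check) is the natural one, but the step you yourself call the crux, $\sigma\left(e\right)=e^{\prime}$, is never actually proved, and the argument you sketch for it is invalid in the near setting. The relation $e\cdot e=e$ is \emph{not} a consequence of $(NG_{3})$: that axiom only says $x\cdot e=e\cdot x=x$ for $x\in G_{1}$, and the near identity $e$ need not belong to $G_{1}$; for the same reason the homomorphism identity may not be applied to the pair $\left(e,e\right)$, since it is postulated only for arguments in $G_{1}$. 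Leaving this as a conditional (``may require \ldots\ a prior extension of the homomorphism property'') leaves the whole first half unproved: without $\sigma\left(e\right)=e^{\prime}$ you obtain neither $N\neq\emptyset$ nor closure of $N$ under inversion. Your inverse computation has the same defect on the $G_{2}$ side: the identity $\left(e^{\prime}\right)^{-1}=e^{\prime}$ and the appeal to uniqueness of near inverses (Theorem \ref{Th01}) require the relevant elements to lie in $G_{2}$, whereas $\sigma$ only takes values in $N_{r_{2}}\left(B\right)^{\ast}G_{2}$.

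The normality step fails concretely as well. In $\sigma\left(g\cdot n\cdot g^{-1}\right)=\sigma\left(g\right)\circ\sigma\left(n\right)\circ\sigma\left(g^{-1}\right)$, the first expansion needs the homomorphism property at the pair $\left(g\cdot n,\,g^{-1}\right)$, but $g\cdot n$ is only guaranteed to lie in $N_{r_{1}}\left(B\right)^{\ast}G_{1}$, not in $G_{1}$, so that identity is unavailable; and even granting the computation, the conclusion $g\cdot n\cdot g^{-1}\in N$ requires $g\cdot n\cdot g^{-1}\in G_{1}$ (since $N\subseteq G_{1}$ by the definition of the kernel), which nothing guarantees. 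Finally, near normality in \cite{Inan2014} and \cite{Ozturk2013} is formulated through weak cosets (agreement of the left and right weak cosets $gN$ and $Ng$), and in this setting the equivalence of that formulation with your pointwise conjugation condition is itself a nontrivial claim --- the classical back-and-forth between the two uses associativity and closure properties that here hold only in the upper approximation. So what you have is the correct classical plan together with an accurate list of the obstructions; the actual near-theoretic content of the theorem --- resolving precisely those obstructions --- is what remains unproved.
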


\begin{definition}
\cite{Ozturk2013}\label{Df18}Let $\mathcal{O}$ be a set of perceptual objects,
$G\subset\mathcal{O}$ a near group and $H$ a subnear group of $G$. Let
$G/_{\sim_{L}}$ be a set of all left weak cosets of $G$ by $H$, $\xi_{\Phi
}\left(  A\right)  $ a descriptive nearness collections and $A\in
\mathcal{P}\left(  \mathcal{O}\right)  $. Then%
\[
N_{r}\left(  B\right)  ^{\ast}\left(  G/_{\sim_{L}}\right)  =\bigcup
_{\xi_{\Phi}\left(  A\right)  \text{ }\underset{\Phi}{\cap}\text{ }%
G/_{\sim_{L}}\neq\emptyset}\xi_{\Phi}\left(  A\right)
\]

is called upper approximation of $G/_{\sim_{L}}$.
\end{definition}

\begin{theorem}
\cite{Ozturk2013}\label{Th0}Let $G$ be a near group, $H$ a subnear group of
$G$ and $G/_{\sim_{L}}$ a set of all left weak cosets of $G$ by $H$. If
$\left(  N_{r}\left(  B\right)  ^{\ast}G\right)  /_{\sim_{L}}\subseteq
N_{r}\left(  B\right)  ^{\ast}\left(  G/_{\sim_{L}}\right)  $, then
$G/_{\sim_{L}}$ is a near group under the operation given by $aH\odot
bH=\left(  a\cdot b\right)  H$ for all $a,b\in G$.
\end{theorem}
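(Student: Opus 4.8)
The plan is to check that $(G/_{\sim_L},\odot)$ satisfies the four near-group axioms $(NG_1)$--$(NG_4)$ relative to the upper approximation $N_r(B)^{\ast}(G/_{\sim_L})$, after first confirming that $\odot$ is well defined on weak cosets. Throughout I write $e$ for the near identity of $G$ and $x^{-1}$ for the near inverse of $x$, both supplied by Theorem \ref{Th01}, and I transport the near-group structure of $G$ along the assignment $a\mapsto aH$.

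The decisive observation is that the representative of a product coset need not lie in $G$, and this is exactly where the inclusion hypothesis is consumed. For $aH,bH\in G/_{\sim_L}$ with $a,b\in G$, axiom $(NG_1)$ for $G$ gives only $a\cdot b\in N_r(B)^{\ast}G$; hence $aH\odot bH=(a\cdot b)H$ is a weak coset of an element of $N_r(B)^{\ast}G$, so $(a\cdot b)H\in (N_r(B)^{\ast}G)/_{\sim_L}$. The hypothesis $(N_r(B)^{\ast}G)/_{\sim_L}\subseteq N_r(B)^{\ast}(G/_{\sim_L})$ then places $aH\odot bH$ in $N_r(B)^{\ast}(G/_{\sim_L})$, establishing $(NG_1)$. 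The identity is handled by the same mechanism: from $a\cdot e=e\cdot a=a$ one gets $aH\odot eH=eH\odot aH=aH$, and since $e\in N_r(B)^{\ast}G$ forces $eH\in (N_r(B)^{\ast}G)/_{\sim_L}\subseteq N_r(B)^{\ast}(G/_{\sim_L})$, the coset $eH$ serves as the near identity demanded by $(NG_3)$.

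Associativity $(NG_2)$ and inverses $(NG_4)$ descend directly from $G$ with no further appeal to the hypothesis. Unwinding $\odot$ twice gives $(aH\odot bH)\odot cH=((a\cdot b)\cdot c)H$ and $aH\odot(bH\odot cH)=(a\cdot(b\cdot c))H$, and since $(a\cdot b)\cdot c=a\cdot(b\cdot c)$ holds in $N_r(B)^{\ast}G$ the two cosets coincide in $N_r(B)^{\ast}(G/_{\sim_L})$. For $(NG_4)$, given $aH$ with $a\in G$, axiom $(NG_4)$ for $G$ yields $a^{-1}\in G$, whence $a^{-1}H\in G/_{\sim_L}$ and $aH\odot a^{-1}H=a^{-1}H\odot aH=(a\cdot a^{-1})H=eH$.

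The step I expect to cost the most is the well-definedness of $\odot$. Because $\sim_L$ is only a weak (compatible) equivalence and $H$ is not assumed normal, the passage from $aH=a'H$ and $bH=b'H$ to $(a\cdot b)H=(a'\cdot b')H$ is not automatic; I would exploit the subnear-group description of $H$ via Theorem \ref{Th001} (so that $h^{-1}\in H$ whenever $h\in H$) together with the cancellation law of Theorem \ref{Th02}(iii) to control how the defining condition $a^{-1}\cdot b\in H\cup\{e\}$ transforms under multiplication. Once independence of representatives is secured, the rest is the essentially mechanical transfer recorded above, with the inclusion hypothesis invoked precisely at the two points --- closure and the identity --- where a representative is guaranteed only to sit in $N_r(B)^{\ast}G$ rather than in $G$ itself.
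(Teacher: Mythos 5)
First, a point of reference: this paper never proves Theorem \ref{Th0} at all --- it is imported from \cite{Ozturk2013} as a preliminary, and the only internal analogue is the proof of Theorem \ref{Th5}, whose step $(NR1)$ simply cites Theorem \ref{Th0} and whose steps $(NR2)$--$(NR3)$ carry out, for the multiplicative structure, exactly the verification you propose. Measured against that template, your handling of $(NG_1)$--$(NG_4)$ is the intended approach: closure and the identity coset $eH$ (with $e$ possibly in $N_r\left(B\right)^{\ast}G\setminus G$) are indeed the two places where the inclusion $\left(N_r\left(B\right)^{\ast}G\right)/_{\sim_L}\subseteq N_r\left(B\right)^{\ast}\left(G/_{\sim_L}\right)$ is consumed. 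The one inaccuracy in this part is your claim that associativity needs ``no further appeal to the hypothesis'': the cosets $\left(\left(a\cdot b\right)\cdot c\right)H$ occurring there also have representatives lying only in $N_r\left(B\right)^{\ast}G$, so placing the associativity identity inside $N_r\left(B\right)^{\ast}\left(G/_{\sim_L}\right)$ uses the hypothesis once more, exactly as the paper's proof of Theorem \ref{Th5} makes explicit for $(NS2)$.

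The genuine gap is the step you yourself flag as the most expensive and then leave as a plan: well-definedness of $\odot$. The sub-lemma you would need --- that $aH=a'H$ and $bH=b'H$ imply $\left(a\cdot b\right)H=\left(a'\cdot b'\right)H$ --- is false in the stated generality, so no combination of Theorem \ref{Th001} and the cancellation law of Theorem \ref{Th02}(iii) can deliver it. Independence of representatives for left-coset multiplication is precisely normality of $H$: from $a'=a\cdot h$ one is forced to ask whether $b^{-1}\cdot h\cdot b$ lands back in $H$, and $H$ is only assumed to be a subnear group. Since ordinary groups on $NAS$ are near groups, any non-normal subgroup already defeats the claim: for $G$ the symmetric group on three letters and $H$ a two-element subgroup, the weak cosets coincide with ordinary cosets, one has $\left(13\right)H=\left(123\right)H$, yet $\left(\left(13\right)\cdot\left(13\right)\right)H=H$ while $\left(\left(123\right)\cdot\left(13\right)\right)H=\left(23\right)H\neq H$. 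The framework avoids this not by proving well-definedness but by never asserting it: as the wording of Definitions \ref{Df7} and \ref{Df8} shows (``weak cosets that determined the elements $x$ and $y$''), the operation is defined on cosets together with their determining representatives, and the near-group axioms are then checked on representatives. So you should either adopt that convention and delete the well-definedness step, or add normality of $H$ as an extra hypothesis; as written, your proposal hinges on a sub-lemma that fails.
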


Let $G$ be a near group and $H$ a subnear group of $G$. The near group
$G/_{\sim_{L}}$ is called a near group of all left weak cosets of $G$ by $H$
and denoted by $G/_{w}H$ \cite{Ozturk2013}.

\section{Nearness Rings on Nearness Approximation Spaces}

\begin{definition}
\label{Df4}Let $NAS=\left(  \mathcal{O},%
\mathcal{F}%
,\sim_{B_{r}},N_{r}\left(  B\right),{\nu}_{N_{r}}\right)  $ be a nearness approximation
space and \textquotedblleft$+$\textquotedblright\ and \textquotedblleft$\cdot
$\textquotedblright\ binary operations defined on $\mathcal{O}$. A subset $R$
of the set of perceptual objects $\mathcal{O}$\ is called a nearness ring on
$NAS$ if the following properties are satisfied:

\begin{enumerate}
\item[$(NR_{1})$] $R$ is an abelian near group on $NAS$ with binary operation
\textquotedblleft$+$\textquotedblright,

\item[$(NR_{2})$] $R$ is a near semigroup on $NAS$ with binary operation
\textquotedblleft$\cdot$\textquotedblright,

\item[$(NR_{3})$] For all $x,y,z\in R$,

$x\cdot\left(  y+z\right)  =\left(  x\cdot y\right)  +\left(  x\cdot z\right)
$ and

$\left(  x+y\right)  \cdot z=\left(  x\cdot z\right)  +\left(  y\cdot
z\right)  $ properties\ hold in $N_{r}\left(  B\right)  ^{\ast}R$.
\end{enumerate}

If in addition:

\begin{enumerate}
\item[$(NR_{4})$] $x\cdot y=y\cdot x$ for all $x,y\in R$,
\end{enumerate}

then $R$ is said to be a commutative\textit{ nearness ring}.

\begin{enumerate}
\item[$(NR_{5})$] If $N_{r}\left(  B\right)  ^{\ast}R$ contains an element
$1_{R}$ such that $1_{R}\cdot x=x\cdot1_{R}=x$ for all $x\in R$,
\end{enumerate}

then $R$ is said to be a \textit{nearness ring with identity}.
\end{definition}

These properties have to hold in $N_{r}\left(  B\right)  ^{\ast}R$. Sometimes
they may be hold in $\mathcal{O\diagup}N_{r}\left(  B\right)  ^{\ast}R$, then
$R$ is not a nearness ring on $NAS$. Multiplying or sum of finite number of
elements in $R$ may not always belongs to $N_{r}\left(  B\right)  ^{\ast}R$.
Therefore always we can not say that $x^{n}\in N_{r}\left(  B\right)  ^{\ast
}R$ or $nx\in N_{r}\left(  B\right)  ^{\ast}R$, for all $x\in R$ and some
positive integer $n$. If $\left(  N_{r}\left(  B\right)  ^{\ast}R,+\right)  $
and $\left(  N_{r}\left(  B\right)  ^{\ast}R,\cdot\right)  $ are groupoids,
then we can say that $x^{n}\in N_{r}\left(  B\right)  ^{\ast}R$ for all
positive integer $n$\ or $nx\in N_{r}\left(  B\right)  ^{\ast}R$ all integer
$n$, for all $x\in R$.

An element $x$ in nearness ring $R$ with identity is said to be \textit{left}
(resp. \textit{right}) \textit{invertible} if there exists $y\in N_{r}\left(
B\right)  ^{\ast}R$ (resp. $z\in N_{r}\left(  B\right)  ^{\ast}R$) such that
$y\cdot x=1_{R}$ (resp. $x\cdot z=1_{R}$). The element $y$ (resp. $z$) is
called a \textit{left} (resp. \textit{right}) \textit{inverse} of $x$. If
$x\in R$ is both left and right invertible, then $x$ is said to be
\textit{nearness invertible} or \textit{nearness unit}. The set of nearness
units in a nearness ring $R$ with identity forms is a near group on $NAS$ with multiplication.

A nearness ring $R$ is a \textit{nearness division ring} iff $\left(
R\backslash\left\{  0\right\}  ,\cdot\right)  $ is a near group on $NAS$,
i.e., every nonzero elements in $R$ is a nearness unit. Similarly, a nearness
ring $R$ is a \textit{nearness field} iff $\left(  R\backslash\left\{
0\right\}  ,\cdot\right)  $ is a commutative near group on $NAS$.

Some elementary properties of elements in nearness rings are not always
provided as in ordinary rings. If we consider $N_{r}\left(  B\right)  ^{\ast
}R$\ as a ordinary ring, then elementary properties of elements in nearness
ring are provided.

\begin{lemma}
Every ordinary rings on $NAS$ are nearness rings on $NAS$.
\end{lemma}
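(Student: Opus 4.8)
The plan is to show that every ordinary ring $R$ on $NAS$ satisfies the three defining axioms $(NR_1)$--$(NR_3)$ of a nearness ring. The key observation driving the whole argument is the monotonicity of the upper approximation operator: since $R \subseteq N_r(B)^{\ast}R$ always holds (each equivalence class meeting $R$ is included in the upper approximation, and every $x\in R$ lies in its own class $[x]_{B_r}$ which meets $R$), any closure, associativity, commutativity, or distributivity identity that already holds \emph{inside} $R$ is automatically witnessed \emph{inside} the larger set $N_r(B)^{\ast}R$. This is the structural heart of the lemma: an ordinary ring is more demanding than a nearness ring precisely because it requires the operations to close in $R$ itself, whereas a nearness ring only requires them to close in $N_r(B)^{\ast}R$.

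First I would verify $(NR_1)$, that $(R,+)$ is an abelian near group on $NAS$. Since $R$ is an ordinary ring, $(R,+)$ is an abelian group in the ordinary sense, so for all $x,y\in R$ we have $x+y\in R\subseteq N_r(B)^{\ast}R$, which gives $(NG_1)$; associativity and commutativity hold in $R$ and hence hold in $N_r(B)^{\ast}R$, giving $(NG_2)$ and the abelian condition; the zero element $0\in R\subseteq N_r(B)^{\ast}R$ serves as the near identity for $(NG_3)$; and additive inverses $-x\in R$ supply $(NG_4)$. Next I would verify $(NR_2)$, that $(R,\cdot)$ is a near semigroup: for all $x,y\in R$, $x\cdot y\in R\subseteq N_r(B)^{\ast}R$ gives closure, and ordinary associativity of multiplication in $R$ lifts to associativity in $N_r(B)^{\ast}R$. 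Finally, $(NR_3)$ follows because the two distributive laws hold as genuine identities in $R$, and since $R\subseteq N_r(B)^{\ast}R$ every instance of these identities is an identity holding in $N_r(B)^{\ast}R$ as well.

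The main conceptual point to handle carefully — rather than an obstacle in the sense of a hard computation — is establishing the inclusion $R\subseteq N_r(B)^{\ast}R$ cleanly, since all four axiom verifications funnel through it. Once that inclusion is in hand, every required property is inherited from the ordinary ring structure essentially for free, because each nearness axiom asks for its defining relation to hold only in $N_r(B)^{\ast}R$, a weaker demand than the corresponding ordinary-ring axiom which forces the relation (and the closure) already within $R$. I would therefore open the proof by recording this inclusion explicitly, then dispatch $(NR_1)$, $(NR_2)$, and $(NR_3)$ in turn by the inheritance argument above, remarking that commutativity and an identity, when present in the ordinary ring, likewise transfer to give $(NR_4)$ and $(NR_5)$.
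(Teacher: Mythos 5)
Your proof is correct: the paper states this lemma without any proof, and your argument via the inclusion $R\subseteq N_{r}\left( B\right)^{\ast}R$ (every $x\in R$ lies in $\left[ x\right]_{B_{r}}$, which meets $R$ and hence belongs to the upper approximation) is exactly the justification the paper implicitly relies on. Since each nearness axiom $(NR_{1})$--$(NR_{3})$ merely asks the corresponding ordinary-ring identity or closure to hold in the superset $N_{r}\left( B\right)^{\ast}R$ rather than in $R$ itself, the inheritance argument you give is complete.
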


\begin{example}
\label{Ex1}Let $\mathcal{O}=\left\{  o,p,r,s,t,v,w,x\right\}  $ be a set of
perceptual objects and $B=\left\{  \varphi_{1},\varphi_{2},\varphi
_{3}\right\}  \subseteq%
\mathcal{F}%
$ a set of probe functions. Values of the probe functions%
\begin{align*}
\varphi_{1}  &  :\mathcal{O}\longrightarrow V_{1}=\left\{  \alpha_{1}%
,\alpha_{2},\alpha_{3},\alpha_{4}\right\}  \text{,}\\
\varphi_{2}  &  :\mathcal{O}\longrightarrow V_{2}=\left\{  \beta_{1},\beta
_{2},\beta_{3}\right\}  \text{,}%
\end{align*}
are given in Table 2.%

\[%
\begin{array}
[c]{c|cccccccc}
& o & p & r & s & t & v & w & x\\\hline
\varphi_{1} & \alpha_{4} & \alpha_{2} & \alpha_{1} & \alpha_{2} & \alpha_{1} &
\alpha_{3} & \alpha_{4} & \alpha_{3}\\
\varphi_{2} & \beta_{1} & \beta_{3} & \beta_{2} & \beta_{3} & \beta_{2} &
\beta_{3} & \beta_{1} & \beta_{3}%
\end{array}
\]

\[
Table\text{ }2.
\]

Let \textquotedblleft$+$\textquotedblright\ and \textquotedblleft$\cdot
$\textquotedblright\ be binary operations of perceptual objects on
$\mathcal{O}$ as in Tables 3 and 4.%

\[%
\begin{array}
[c]{c|cccccccc}%
+ & o & p & r & s & t & v & w & x\\\hline
o & \multicolumn{1}{c}{o} & p & r & s & t & v & w & x\\
p & \multicolumn{1}{c}{p} & r & s & t & v & w & x & o\\
r & \multicolumn{1}{c}{r} & s & t & v & w & x & o & p\\
s & \multicolumn{1}{c}{s} & t & v & w & x & o & p & r\\
t & \multicolumn{1}{c}{t} & v & w & x & o & p & r & s\\
v & \multicolumn{1}{c}{v} & w & x & p & p & r & s & t\\
w & \multicolumn{1}{c}{w} & x & o & p & r & s & t & v\\
x & \multicolumn{1}{c}{x} & o & p & r & s & t & v & w
\end{array}
\text{ \ \ \ \ \ \ \ }%
\begin{array}
[c]{c|cccccccc}%
\cdot & o & p & r & s & t & v & w & x\\\hline
o & \multicolumn{1}{c}{o} & o & o & o & o & o & o & o\\
p & \multicolumn{1}{c}{o} & p & r & s & t & v & w & x\\
r & \multicolumn{1}{c}{o} & r & t & w & o & r & t & w\\
s & \multicolumn{1}{c}{o} & s & w & p & t & o & r & v\\
t & \multicolumn{1}{c}{o} & t & o & t & o & t & o & t\\
v & \multicolumn{1}{c}{o} & v & r & x & t & p & w & s\\
w & \multicolumn{1}{c}{o} & w & t & r & o & w & t & r\\
x & \multicolumn{1}{c}{o} & x & w & v & t & s & r & p
\end{array}
\]

\[
Table\text{ }3.\text{
\ \ \ \ \ \ \ \ \ \ \ \ \ \ \ \ \ \ \ \ \ \ \ \ \ \ \ \ \ \ \ \ \ \ \ \ \ }%
Table\text{ }4.
\]

Since $r+\left(  s+s\right)  \neq\left(  r+s\right)  +s$, $\left(
\mathcal{O},\mathcal{+}\right)  $ is not a group, i.e., $\left(
\mathcal{O},\mathcal{+},\cdot\right)  $ is not a ring. Let $R=\left\{
r,t,w\right\}  $ be a subset of perceptual objects. Let \textquotedblleft%
$+$\textquotedblright\ and \textquotedblleft$\cdot$\textquotedblright\ be
operations of perceptual objects on $R\subseteq\mathcal{O}$ as in Tables 5 and 6.%

\[%
\begin{array}
[c]{c|ccc}%
+ & r & t & w\\\hline
r & t & w & o\\
t & w & o & r\\
w & o & r & t
\end{array}
\text{ \ \ \ \ \ \ \ \ \ \ }%
\begin{array}
[c]{c|ccc}%
\cdot & r & t & w\\\hline
r & t & o & t\\
t & o & o & o\\
w & t & o & t
\end{array}
\]%
\[
\text{\ }Table\text{ }5.\text{ \ \ \ \ \ \ \ \ \ \ \ \ \ \ \ \ \ \ \ }%
Table\text{ }6.
\]

\[
\left[  o\right]  _{\varphi_{1}}=\left\{  x^{\prime}\in\mathcal{O}\mid
\varphi_{1}\left(  x^{\prime}\right)  =\varphi_{1}\left(  o\right)
=\alpha_{4}\right\}  =\left\{  o,w\right\}  \text{,}%
\]%
\begin{align*}
\left[  p\right]  _{\varphi_{1}}  &  =\left\{  x^{\prime}\in\mathcal{O}%
\mid\varphi_{1}\left(  x^{\prime}\right)  =\varphi_{1}\left(  p\right)
=\alpha_{2}\right\} \\
&  =\left\{  p,s\right\} \\
&  =\left[  s\right]  _{\varphi_{1}}\text{,}%
\end{align*}%
\begin{align*}
\left[  r\right]  _{\varphi_{1}}  &  =\left\{  x^{\prime}\in\mathcal{O}%
\mid\varphi_{1}\left(  x^{\prime}\right)  =\varphi_{1}\left(  r\right)
=\alpha_{1}\right\} \\
&  =\left\{  r,t\right\} \\
&  =\left[  t\right]  _{\varphi_{1}}\text{,}%
\end{align*}%
\begin{align*}
\left[  v\right]  _{\varphi_{1}}  &  =\left\{  x^{\prime}\in\mathcal{O}%
\mid\varphi_{1}\left(  x^{\prime}\right)  =\varphi_{1}\left(  v\right)
=\alpha_{3}\right\} \\
&  =\left\{  v,x\right\} \\
&  =\left[  x\right]  _{\varphi_{1}}\text{.}%
\end{align*}

Hence we have that $\xi_{\varphi_{1}}=\left\{  \left[  o\right]  _{\varphi
_{1}},\left[  r\right]  _{\varphi_{1}},\left[  v\right]  _{\varphi_{1}%
},\left[  w\right]  _{\varphi_{1}}\right\}  $.%
\begin{align*}
\left[  o\right]  _{\varphi_{2}}  &  =\left\{  x^{\prime}\in\mathcal{O}%
\mid\varphi_{2}\left(  x^{\prime}\right)  =\varphi_{2}\left(  o\right)
=\beta_{1}\right\} \\
&  =\left\{  o,w\right\} \\
&  =\left[  w\right]  _{\varphi_{2}}\text{,}%
\end{align*}%
\begin{align*}
\left[  p\right]  _{\varphi_{2}}  &  =\left\{  x^{\prime}\in\mathcal{O}%
\mid\varphi_{2}\left(  x^{\prime}\right)  =\varphi_{2}\left(  p\right)
=\beta_{3}\right\} \\
&  =\left\{  p,s,v,x\right\} \\
&  =\left[  s\right]  _{\varphi_{2}}=\left[  v\right]  _{\varphi_{2}}=\left[
x\right]  _{\varphi_{2}}\text{,}%
\end{align*}%
\begin{align*}
\left[  r\right]  _{\varphi_{2}}  &  =\left\{  x^{\prime}\in\mathcal{O}%
\mid\varphi_{2}\left(  x^{\prime}\right)  =\varphi_{2}\left(  r\right)
=\beta_{2}\right\} \\
&  =\left\{  r,t\right\} \\
&  =\left[  t\right]  _{\varphi_{2}}\text{.}%
\end{align*}

Thus we obtain that $\xi_{\varphi_{2}}=\left\{  \left[  o\right]
_{\varphi_{2}},\left[  p\right]  _{\varphi_{2}},\left[  r\right]
_{\varphi_{2}}\right\}  $.

Therefore, for $r=1$, a set of partitions of $\mathcal{O}$\ is $N_{1}\left(
B\right)  =\left\{  \xi_{\varphi_{1}},\xi_{\varphi_{2}}\right\}  $.

In this case, we can write%
\begin{align*}
N_{1}\left(  B\right)  ^{\ast}R  &  =%
\genfrac{}{}{0pt}{1}{\bigcup\text{ }\left[  x\right]  _{\varphi_{i}}}{\left[
x\right]  _{\varphi_{i}}\text{ }\cap\text{ }R\neq\varnothing}%
\\
&  =\left\{  r,t\right\}  \cup\left\{  o,w\right\}  \cup\left\{  o,w\right\}
\cup\left\{  r,t\right\} \\
&  =\left\{  o,r,t,w\right\}  \neq\mathcal{O}\text{.}%
\end{align*}

From Definition \ref{Df4}, since

\begin{enumerate}
\item[$(NR_{1})$] $R$ is an abelian near group on $NAS$ with binary operation
\textquotedblleft$+$\textquotedblright,

\item[$(NR_{2})$] $R$ is a near semigroup on $NAS$ with binary operation
\textquotedblleft$\cdot$\textquotedblright\ and

\item[$(NR_{3})$] For all $x,y,z\in R$,

$x\cdot\left(  y+z\right)  =\left(  x\cdot y\right)  +\left(  x\cdot z\right)
$ and

$\left(  x+y\right)  \cdot z=\left(  x\cdot z\right)  +\left(  y\cdot
z\right)  $ properties\ hold in $N_{r}\left(  B\right)  ^{\ast}R$.
\end{enumerate}

conditions hold, $R$ is a nearness ring on $NAS$.
\end{example}

\begin{proposition}
Let $R$ be a nearness ring on $NAS$ and $0\in R$. If $0\cdot x,x\cdot0\in R$,
then for all $x,y\in R$

(i) $x\cdot0=0\cdot x=0$,

(ii) $x\cdot\left(  -y\right)  =\left(  -x\right)  \cdot y=-\left(  x\cdot
y\right)  $,

(iii) $\left(  -x\right)  \cdot\left(  -y\right)  =x\cdot y$.
\end{proposition}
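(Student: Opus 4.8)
The plan is to imitate the classical ring-theoretic arguments, but to track carefully where each product lives, since multiplication in a nearness ring is closed only in $N_{r}\left( B\right) ^{\ast }R$ rather than in $R$ itself. Throughout I will use that $\left( R,+\right) $ is an abelian near group by $\left( NR_{1}\right) $, so that the cancellation law of Theorem \ref{Th02}(iii) and the uniqueness of additive inverses of Theorem \ref{Th01}(ii) are available for elements lying in $R$, together with the two distributive laws of $\left( NR_{3}\right) $, which hold in $N_{r}\left( B\right) ^{\ast }R$.

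For (i), I would start from $0=0+0$ and apply left distributivity to write $x\cdot 0=x\cdot \left( 0+0\right) =x\cdot 0+x\cdot 0$ in $N_{r}\left( B\right) ^{\ast }R$. Since $x\cdot 0\in R$ by hypothesis, this reads $\left( x\cdot 0\right) +0=\left( x\cdot 0\right) +\left( x\cdot 0\right) $ with all summands in $R$; cancelling $x\cdot 0$ via Theorem \ref{Th02}(iii) gives $x\cdot 0=0$. The identity $0\cdot x=0$ is obtained symmetrically from right distributivity and the hypothesis $0\cdot x\in R$.

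For (ii), the idea is to exhibit $x\cdot \left( -y\right) $ as an additive inverse of $x\cdot y$. Using $y+\left( -y\right) =0$ (valid in $R$ because $-y\in R$) and left distributivity, I get $x\cdot y+x\cdot \left( -y\right) =x\cdot \left( y+\left( -y\right) \right) =x\cdot 0=0$ by part (i); symmetrically $\left( -x\right) \cdot y+x\cdot y=0$ from right distributivity. By uniqueness of the additive inverse in the near group $\left( R,+\right) $ (Theorem \ref{Th01}(ii)), both $x\cdot \left( -y\right) $ and $\left( -x\right) \cdot y$ must equal $-\left( x\cdot y\right) $. Finally (iii) follows by applying the second half of (ii) with $y$ replaced by $-y$, then the first half of (ii), and then the double-inverse identity of Theorem \ref{Th02}(i): $\left( -x\right) \cdot \left( -y\right) =-\bigl( x\cdot \left( -y\right) \bigr) =-\bigl( -\left( x\cdot y\right) \bigr) =x\cdot y$.

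The step I expect to be the genuine obstacle --- and the only place the argument departs from the textbook ring proof --- is justifying that the products under cancellation and inversion actually lie in $R$, not merely in $N_{r}\left( B\right) ^{\ast }R$. For (i) this is exactly what the hypothesis $0\cdot x,x\cdot 0\in R$ supplies. For (ii) and (iii) the near-group axioms of $\left( R,+\right) $ can be invoked only once $x\cdot y$, $x\cdot \left( -y\right) $ and $\left( -x\right) \cdot y$ are known to lie in $R$; so the argument implicitly requires this closure, or else one appeals to the remark following Definition \ref{Df4} that $N_{r}\left( B\right) ^{\ast }R$ may be treated as an ordinary ring, in which uniqueness of inverses and cancellation hold unconditionally. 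Once that closure is secured, each identity collapses to a one-line distributive computation.
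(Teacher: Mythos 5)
The paper states this proposition with no proof at all, so there is nothing of the authors' to compare your argument against; what follows is an assessment of your proposal on its own terms. Your proof of (i) is complete and correct: the hypothesis $x\cdot 0,\,0\cdot x\in R$ is precisely what turns the distributive identity $x\cdot 0=x\cdot(0+0)=(x\cdot 0)+(x\cdot 0)$ into an equation whose three participants all lie in $R$, so that additive cancellation in the near group $(R,+)$ (Theorem \ref{Th02}(iii), written additively) legitimately applies.

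For (ii) and (iii) the obstacle you flag is genuine, and it cannot be argued away from the stated hypotheses alone. Distributivity does give $(x\cdot y)+(x\cdot(-y))=x\cdot(y+(-y))=x\cdot 0=0$, but Theorem \ref{Th01}(ii) asserts uniqueness of additive inverses only for elements of $R$ and only among candidate inverses lying in $R$; since $x\cdot y$ and $x\cdot(-y)$ are guaranteed to lie merely in $N_{r}\left(B\right)^{\ast}R$, neither the existence of $-\left(x\cdot y\right)$ nor the identification $x\cdot(-y)=-\left(x\cdot y\right)$ follows. In fact the notation $-\left(x\cdot y\right)$ in the statement already presupposes an additive inverse for $x\cdot y$, which the near-group axioms simply do not supply for elements outside $R$. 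So (ii) and (iii) become provable only under an extra closure assumption --- for instance $x\cdot y,\,x\cdot(-y),\,(-x)\cdot y\in R$, or reading the whole proposition inside $N_{r}\left(B\right)^{\ast}R$ regarded as an ordinary ring, which is evidently what the remark preceding the proposition in the paper intends. Your proposal correctly isolates this as the missing hypothesis rather than concealing it, and once that closure is granted, your distributive computations for (ii) and your derivation of (iii) from (ii) together with the double-inverse identity of Theorem \ref{Th02}(i) are the standard and correct ones. In short: (i) is proved; the defect in (ii)--(iii) lies in the proposition as stated, not in your reasoning.
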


\begin{definition}
\label{Df5}Let $R$ be a nearness ring on $NAS$ and $S$ a nonempty subset of
$R$. $S$ is called subnearness ring of $R$, if $S$ is a nearness ring with
binary operations \textquotedblleft$+$\textquotedblright\ and
\textquotedblleft$\cdot$\textquotedblright\ on nearness ring $R$.
\end{definition}

\begin{definition}
Let we consider nearness field $R$ and a nonempty subset $S$ of $R$. $S$ is
called subnearness field of $R$, if $S$ is a nearness field.
\end{definition}

\begin{theorem}
\label{Th1}Let $R$ be a nearness ring on $NAS$ and $(N_{r}\left(  B\right)
^{\ast}S,+)$, $(N_{r}\left(  B\right)  ^{\ast}S,\cdot)$ groupoids. A nonempty
subset $S$ of nearness ring $R$ is a subnearness ring of $R$ iff $-x\in S$ for
all $x\in S$.
\end{theorem}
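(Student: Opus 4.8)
The plan is to prove the two implications separately, with the additive instance of the subnear group criterion (Theorem~\ref{Th001}) carrying the main structural load. Throughout I would use the elementary fact that $S\subseteq N_r(B)^{\ast}S$: each $s\in S$ lies in its own class $[s]_{B_r}$, which meets $S$, so $[s]_{B_r}\subseteq N_r(B)^{\ast}S$ and hence $s\in N_r(B)^{\ast}S$.

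For the forward implication, suppose $S$ is a subnearness ring of $R$. Then by Definition~\ref{Df5} together with $(NR_1)$ of Definition~\ref{Df4}, $S$ is in particular an abelian near group on $NAS$ under ``$+$''. The near group axiom $(NG_4)$ applied to $S$ then produces, for every $x\in S$, an additive near inverse lying in $S$; writing this inverse as $-x$, we get $-x\in S$ for all $x\in S$. This direction is essentially immediate and needs no closure bookkeeping.

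For the reverse implication, assume $-x\in S$ for all $x\in S$. First I would dispatch the additive structure with Theorem~\ref{Th001}: since $R$ is an abelian near group under ``$+$'', $S$ is nonempty, $(N_r(B)^{\ast}S,+)$ is a groupoid, and $-x\in S$ for all $x\in S$, the theorem gives that $S$ is a subnear group of $(R,+)$; commutativity is inherited from $R$ because $x+y=y+x$ holds for all $x,y\in R\supseteq S$ and, by the groupoid hypothesis, both sides lie in $N_r(B)^{\ast}S$, so $(NR_1)$ holds for $S$. It then remains to check $(NR_2)$ and $(NR_3)$. For $(NR_2)$, closure $x\cdot y\in N_r(B)^{\ast}S$ for $x,y\in S$ is exactly the groupoid hypothesis on $(N_r(B)^{\ast}S,\cdot)$, while associativity $(x\cdot y)\cdot z=x\cdot(y\cdot z)$ is inherited as a genuine identity in $\mathcal{O}$ from the near semigroup $R$; applying the groupoid hypothesis twice, together with $S\subseteq N_r(B)^{\ast}S$, places both sides in $N_r(B)^{\ast}S$. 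The distributive laws $(NR_3)$ are handled identically: they hold in $R$ for all $x,y,z\in S$, and each of $y+z$, $x\cdot y$, $x\cdot z$, $x\cdot(y+z)$ and $(x\cdot y)+(x\cdot z)$ lands in $N_r(B)^{\ast}S$ by repeated use of the two groupoid hypotheses. Assembling $(NR_1)$, $(NR_2)$ and $(NR_3)$ shows $S$ is a nearness ring under the operations of $R$, i.e.\ a subnearness ring.

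The only genuine obstacle, and the reason the two groupoid assumptions appear in the hypothesis, is this closure bookkeeping. Associativity and distributivity are inherited for free from $R$ as plain identities in $\mathcal{O}$, but to assert that they ``hold in $N_r(B)^{\ast}S$'' one must verify that every intermediate product or sum stays inside $N_r(B)^{\ast}S$ rather than merely inside the larger set $N_r(B)^{\ast}R$. The hypotheses that $(N_r(B)^{\ast}S,+)$ and $(N_r(B)^{\ast}S,\cdot)$ are groupoids, combined with $S\subseteq N_r(B)^{\ast}S$, are precisely what supply this, so beyond careful tracking of the ambient upper approximation no further computation is required.
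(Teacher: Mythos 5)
Your proof is correct and takes essentially the same route as the paper's: the forward direction is immediate from $(NR_{1})$ and the near-inverse axiom, and the converse applies Theorem~\ref{Th001} to $(S,+)$ and then uses the two groupoid hypotheses to place every intermediate sum and product in $N_{r}(B)^{\ast}S$ so that associativity and the distributive laws, inherited from $R$, hold there. The only difference is expository: you make explicit the inclusion $S\subseteq N_{r}(B)^{\ast}S$ and the inheritance of additive commutativity, which the paper leaves implicit.
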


\begin{proof}
Suppose that $S$ is a subnearness ring of $R$. Then $S$ is a nearness ring and
$-x\in S$ for all $x\in S$. Conversely, suppose $-x\in S$ for all $x\in S$.
Then since $(N_{r}\left(  B\right)  ^{\ast}S,+)$ is a groupoid, from Theorem
\ref{Th001} $\left(  S,+\right)  $ is a commutative near group on $NAS$. By
the hypothesis, since $(N_{r}\left(  B\right)  ^{\ast}S,\cdot)$ is a groupoid
and $S\subseteq R$, then associative property holds in $N_{r}\left(  B\right)
^{\ast}S$. Hence $\left(  S,\cdot\right)  $ is a near semigroup on $NAS$. For
all $x,y,z\in S\subseteq R$, $y+z\in N_{r}\left(  B\right)  ^{\ast}S$ and
$x\cdot(y+z)\in N_{r}\left(  B\right)  ^{\ast}S$. Also $x\cdot y+x\cdot z\in
N_{r}\left(  B\right)  ^{\ast}S$. Since $R$ is a nearness ring, $x\cdot\left(
y+z\right)  =\left(  x\cdot y\right)  +\left(  x\cdot z\right)  $ property
holds in $N_{r}\left(  B\right)  ^{\ast}S$. Similarly we can show that
$\left(  x+y\right)  \cdot z=\left(  x\cdot z\right)  +\left(  y\cdot
z\right)  $ property holds in $N_{r}\left(  B\right)  ^{\ast}S$. Therefore $S$
is a subnearness ring of nearness ring $R$.
\end{proof}

\begin{example}
\label{Ex2}From Example \ref{Ex1}, let we consider the nearness ring
$R=\left\{  r,t,w\right\}  $ on $NAS$. Let $S=\left\{  r,w\right\}  \ $be a
subset of nearness ring $R$. Then, \textquotedblleft$+$\textquotedblright\ and
\textquotedblleft$\cdot$\textquotedblright\ are binary operations of
perceptual objects on $S\subseteq R$ as in Tables 7 and 8.%

\[%
\begin{array}
[c]{c|cc}%
+ & r & w\\\hline
r & t & o\\
w & o & t
\end{array}
\text{ \ \ \ \ \ \ \ \ \ \ }%
\begin{array}
[c]{c|cc}%
\cdot & r & w\\\hline
r & t & t\\
w & t & t
\end{array}
\]%
\[
\text{\ }Table\text{ }7.\text{ \ \ \ \ \ \ \ \ \ \ \ \ \ }Table\text{ }8.
\]

We know from Example \ref{Ex1}, for $r=1$, a classification of $\mathcal{O}%
$\ is $N_{1}\left(  B\right)  =\left\{  \xi_{\left(  \varphi_{1}\right)  }%
,\xi_{\left(  \varphi_{2}\right)  }\right\}  $.

Then, we can obtain $N_{1}\left(  B\right)  ^{\ast}S=\left\{  o,r,t,w\right\}
$.

Hence we can observe that $(N_{r}\left(  B\right)  ^{\ast}S,+)$,
$(N_{r}\left(  B\right)  ^{\ast}S,\cdot)$ are groupoids and $-r=w,-w=r\in
N_{r}\left(  B\right)  ^{\ast}S$. Therefore from Theorem \ref{Th1}, $S$ is a
subnearness ring of nearness ring $R$.
\end{example}

\begin{theorem}
\label{Th7}Let $R$ be a nearness ring on $NAS$, $S_{1}$ and $S_{2}$ two
subnearness rings of $R$ and $N_{r}\left(  B\right)  ^{\ast}S_{1}$,
$N_{r}\left(  B\right)  ^{\ast}S_{2}$ groupoids with the binary operations
\textquotedblleft$+$\textquotedblright\ and \textquotedblleft$\cdot
$\textquotedblright. If%
\[
\left(  N_{r}\left(  B\right)  ^{\ast}S_{1}\right)  \cap\left(  N_{r}\left(
B\right)  ^{\ast}S_{2}\right)  =N_{r}\left(  B\right)  ^{\ast}\left(
S_{1}\cap S_{2}\right)  \text{,}%
\]
then $S_{1}\cap S_{2}$ is a subnearness ring of $R$.
\end{theorem}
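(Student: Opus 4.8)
The plan is to reduce everything to the subnearness ring criterion of Theorem \ref{Th1}, applied to the subset $S_1 \cap S_2$ of $R$. That theorem characterizes a nonempty subset as a subnearness ring precisely by closure under additive inverses, provided its upper approximation is a groupoid under both $+$ and $\cdot$. So the argument splits into two independent verifications: the two groupoid hypotheses for $N_r(B)^{\ast}(S_1 \cap S_2)$, and the inverse-closure of $S_1 \cap S_2$. I would also record at the outset that $S_1 \cap S_2$ is nonempty, so that Theorem \ref{Th1} is applicable; this should follow from the common additive near-group structure carried by $S_1$ and $S_2$, and is worth stating rather than leaving implicit.

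First I would dispatch the inverse condition, which is the easy half. Since $S_1$ and $S_2$ are themselves subnearness rings whose upper approximations are groupoids, Theorem \ref{Th1} applied to each gives $-x \in S_1$ for every $x \in S_1$ and $-x \in S_2$ for every $x \in S_2$. Hence for any $x \in S_1 \cap S_2$ we have simultaneously $-x \in S_1$ and $-x \in S_2$, so $-x \in S_1 \cap S_2$, as required.

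The groupoid conditions are where the equating hypothesis does its work. Taking $a, b \in N_r(B)^{\ast}(S_1 \cap S_2)$ and using the assumed identity $\left(N_r(B)^{\ast}S_1\right) \cap \left(N_r(B)^{\ast}S_2\right) = N_r(B)^{\ast}(S_1 \cap S_2)$, I conclude that $a$ and $b$ lie in both $N_r(B)^{\ast}S_1$ and $N_r(B)^{\ast}S_2$. Because each of these is a groupoid under $+$, closure yields $a + b \in N_r(B)^{\ast}S_1$ and $a + b \in N_r(B)^{\ast}S_2$, so $a + b$ lies in their intersection, which by hypothesis is exactly $N_r(B)^{\ast}(S_1 \cap S_2)$. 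Thus $\left(N_r(B)^{\ast}(S_1 \cap S_2), +\right)$ is a groupoid, and the verbatim argument with $\cdot$ in place of $+$ settles multiplication. With both groupoid hypotheses and the inverse-closure established, Theorem \ref{Th1} delivers that $S_1 \cap S_2$ is a subnearness ring of $R$.

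I expect the only genuine subtlety to be the indispensable role of the equating hypothesis. In general the upper approximation operator does not distribute over intersection, so without $\left(N_r(B)^{\ast}S_1\right) \cap \left(N_r(B)^{\ast}S_2\right) = N_r(B)^{\ast}(S_1 \cap S_2)$ an element of $N_r(B)^{\ast}(S_1 \cap S_2)$ need not belong to either $N_r(B)^{\ast}S_i$, and the closure step would break down. This mirrors exactly the near-subgroup intersection result quoted earlier in the excerpt, so no new machinery beyond Theorem \ref{Th1} is needed.
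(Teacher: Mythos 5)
Your proposal is essentially correct, but a comparison with ``the paper's proof'' is not possible in the usual sense: the authors state Theorem \ref{Th7} with no proof at all. Judged on its own, your reduction to Theorem \ref{Th1} is surely the intended argument, since it mirrors both the near-subgroup intersection result quoted from \cite{Inan2014} in the preliminaries and the proof the paper does give for the analogous theorem on nearness ideals. Your two main verifications are sound: the equating hypothesis converts closure of $N_{r}\left( B\right) ^{\ast }S_{1}$ and $N_{r}\left( B\right) ^{\ast }S_{2}$ under \textquotedblleft$+$\textquotedblright\ and \textquotedblleft$\cdot$\textquotedblright\ into closure of $N_{r}\left( B\right) ^{\ast }\left( S_{1}\cap S_{2}\right) $, and inverse-closure of each $S_{i}$ (via the forward direction of Theorem \ref{Th1}, or simply via $(NG_{4})$) gives $-x\in S_{1}\cap S_{2}$ for all $x\in S_{1}\cap S_{2}$.

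The one flawed point is your justification of nonemptiness. In this framework a near group need \emph{not} contain its identity element: $(NG_{3})$ only guarantees $e\in N_{r}\left( B\right) ^{\ast }G$, so a subnearness ring of $R$ need not contain a zero element, and $S_{1}$ and $S_{2}$ therefore need not share any element. Thus ``the common additive near-group structure'' does not produce a common point, and nonemptiness of $S_{1}\cap S_{2}$ is genuinely an additional implicit hypothesis --- one that matters, since both Definition \ref{Df5} and Theorem \ref{Th1} require nonemptiness. You were right to flag that this must be recorded, but the justification you sketch for it would not survive scrutiny. The paper commits the same gloss (its proof of the nearness-ideal intersection theorem simply takes $x,y\in I_{1}\cap I_{2}$ without comment), so this is a defect shared with the authors rather than a gap peculiar to your argument.
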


\begin{corollary}
Let $R$ be a nearness ring on $NAS$, $\left\{  S_{i}:i\in\Delta\right\}  $ a
nonempty family of subnearness rings of $R$ and $N_{r}\left(  B\right)
^{\ast}S_{i}$ groupoids. If%
\[%
{\textstyle\bigcap\limits_{i\in\Delta}}
\left(  N_{r}\left(  B\right)  ^{\ast}S_{i}\right)  =N_{r}\left(  B\right)
^{\ast}\left(
{\textstyle\bigcap\limits_{i\in\Delta}}
S_{i}\right)  \text{,}%
\]

then $%
{\textstyle\bigcap\limits_{i\in\Delta}}
S_{i}$ is a subnearness ring of $R$.
\end{corollary}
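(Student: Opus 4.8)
The plan is to reduce everything to the subnearness ring criterion of Theorem~\ref{Th1}, applied directly to the single set $T=\bigcap_{i\in\Delta}S_{i}$, rather than attempting a finite induction built on the two-set case of Theorem~\ref{Th7} (such an induction cannot reach an arbitrary, possibly infinite, index set $\Delta$). According to Theorem~\ref{Th1}, once we know that $T$ is a nonempty subset of $R$ and that both $\left(N_{r}\left(B\right)^{\ast}T,+\right)$ and $\left(N_{r}\left(B\right)^{\ast}T,\cdot\right)$ are groupoids, it suffices to verify the single closure-under-negation condition $-x\in T$ for every $x\in T$.

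The negation condition is immediate. If $x\in T$, then $x\in S_{i}$ for every $i\in\Delta$. Each $S_{i}$ is a subnearness ring of $R$ with $\left(N_{r}\left(B\right)^{\ast}S_{i},+\right)$ and $\left(N_{r}\left(B\right)^{\ast}S_{i},\cdot\right)$ groupoids, so Theorem~\ref{Th1} gives $-x\in S_{i}$ for every $i$. Hence $-x\in\bigcap_{i\in\Delta}S_{i}=T$, as required.

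For the two groupoid conditions I would invoke the standing hypothesis
\[
\bigcap_{i\in\Delta}\left(N_{r}\left(B\right)^{\ast}S_{i}\right)=N_{r}\left(B\right)^{\ast}\left(\bigcap_{i\in\Delta}S_{i}\right)=N_{r}\left(B\right)^{\ast}T,
\]
which is precisely the device that lets the upper approximation commute with the intersection. Since each $\left(N_{r}\left(B\right)^{\ast}S_{i},+\right)$ is a groupoid, an intersection of groupoids is again closed under $+$: for $a,b\in N_{r}\left(B\right)^{\ast}T$ we have $a,b\in N_{r}\left(B\right)^{\ast}S_{i}$ for each $i$, whence $a+b\in N_{r}\left(B\right)^{\ast}S_{i}$ for each $i$, and therefore $a+b\in\bigcap_{i\in\Delta}\left(N_{r}\left(B\right)^{\ast}S_{i}\right)=N_{r}\left(B\right)^{\ast}T$. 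The identical argument with $\cdot$ in place of $+$ shows that $\left(N_{r}\left(B\right)^{\ast}T,\cdot\right)$ is a groupoid. With the negation condition already in hand, Theorem~\ref{Th1} then yields that $T$ is a subnearness ring of $R$.

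The step that genuinely requires care, and which I expect to be the main obstacle, is the nonemptiness of $T$, since Theorem~\ref{Th1} applies only to nonempty subsets. In the nearness setting the additive near identity of each $S_{i}$ lives in $N_{r}\left(B\right)^{\ast}S_{i}$ rather than in $S_{i}$ itself (cf. $(NG_{3})$ and Theorem~\ref{Th01}), so the fact that every $S_{i}$ is nonempty does not by itself force $\bigcap_{i\in\Delta}S_{i}$ to be nonempty. I would therefore either read nonemptiness of $T$ as part of the hypothesis (consistent with the commuting-intersection assumption, which presupposes a well-behaved common part) or add it as an explicit standing assumption; once $T\neq\varnothing$ is secured, the remainder is exactly the routine verification sketched above.
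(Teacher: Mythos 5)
Your proof is correct, and in fact the paper supplies no proof to compare against: the statement appears as an unproved corollary of Theorem~\ref{Th7}, which is itself left unproved, so the implied route is ``iterate the two-set case'' --- a route you rightly reject, since induction cannot reach an infinite index set $\Delta$, and even for finite $\Delta$ it would require upper-approximation hypotheses on the intermediate intersections that are never assumed. Your alternative, applying the criterion of Theorem~\ref{Th1} directly to $T=\bigcap_{i\in\Delta}S_{i}$, is the natural argument and follows the same pattern as the paper's written proof of the analogous intersection theorem for nearness ideals: the hypothesis $\bigcap_{i\in\Delta}\left(N_{r}\left(B\right)^{\ast}S_{i}\right)=N_{r}\left(B\right)^{\ast}T$ converts closure of each $N_{r}\left(B\right)^{\ast}S_{i}$ under $+$ and $\cdot$ into closure of $N_{r}\left(B\right)^{\ast}T$, while $(NG_{4})$ (equivalently the easy direction of Theorem~\ref{Th1}) puts $-x$ in every $S_{i}$ and hence in $T$. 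Your point about nonemptiness is a genuine catch rather than pedantry: Definition~\ref{Df5} requires a subnearness ring to be nonempty, the additive near identity of $S_{i}$ need only lie in $N_{r}\left(B\right)^{\ast}S_{i}$ rather than in $S_{i}$ itself, and the commuting hypothesis is compatible with $T=\emptyset$ (for instance when the upper approximations are pairwise disjoint); the paper, here and in Theorem~\ref{Th7}, silently assumes otherwise. So your argument is complete once $T\neq\emptyset$ is added as (or read into) the hypotheses, and it is more careful than the text it fills in.
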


\begin{definition}
\label{Df6}Let $R$ be a nearness ring on $NAS$ and $I$ be a nonempty subset of
$R$. $I$ is a left (right) nearness ideal of $R$ provided for all $x,y\in I$
and for all $r\in R$, $x-y\in N_{r}\left(  B\right)  ^{\ast}I$, $r\cdot x\in
N_{r}\left(  B\right)  ^{\ast}I$ ($x-y\in N_{r}\left(  B\right)  ^{\ast}I$,
$x\cdot r\in N_{r}\left(  B\right)  ^{\ast}I$).
\end{definition}

A nonempty set $I$ of a nearness ring $R$ is called a nearness ideal of $R$ if
$I$ is both a left and a right nearness ideal of $R$.

There is only one guaranteed trivial nearness ideal of nearness ring $R$,
i.e., $R$ itself. Furthermore, $\left\{  0\right\}  $ is a trivial nearness
ideal of nearness ring $R$ iff $0\in R$.

\begin{lemma}
Every nearness ideal is a subnearness ring of nearness ring $R$.
\end{lemma}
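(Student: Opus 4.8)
The plan is to deduce the result from Theorem~\ref{Th1}, the subnearness-ring criterion, rather than to verify the ring axioms $(NR_1)$--$(NR_3)$ for $I$ directly. Let $I$ be a nearness ideal of $R$. Theorem~\ref{Th1} tells us that, once $\left(N_r\left(B\right)^{\ast}I,+\right)$ and $\left(N_r\left(B\right)^{\ast}I,\cdot\right)$ are known to be groupoids, it suffices to establish the single closure condition $-x\in I$ for every $x\in I$; the criterion then supplies the additive commutative near group structure, the multiplicative near semigroup structure, and both distributive laws automatically. So I would first record the groupoid hypotheses on $N_r\left(B\right)^{\ast}I$ with respect to both operations, and then reduce the whole lemma to verifying that $I$ is closed under additive inverses.

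Next I would exploit the defining property of the ideal. By Definition~\ref{Df6}, for all $x,y\in I$ we have $x-y\in N_r\left(B\right)^{\ast}I$, so $I$ is closed under subtraction into its upper approximation; moreover the absorption property $r\cdot x,\,x\cdot r\in N_r\left(B\right)^{\ast}I$ for all $r\in R$ places every relevant product inside $N_r\left(B\right)^{\ast}I$. Since $R$ is an abelian near group under ``$+$'' by $(NR_1)$, each $x\in I\subseteq R$ already possesses an additive inverse $-x$ in $R$, and the subtraction-closure of $I$ is exactly the hypothesis needed to feed Theorem~\ref{Th001}, applied to the additive near group $\left(R,+\right)$, and so to conclude that $\left(I,+\right)$ is a commutative subnear group.

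The hard part will be pinning down the membership $-x\in I$ as opposed to the weaker $-x\in N_r\left(B\right)^{\ast}I$ that the ideal axiom literally delivers: writing $-x=0-x$ gives $-x\in N_r\left(B\right)^{\ast}I$ once $0\in I$, but promoting this to genuine membership in $I$ is the delicate point, and it is here that the groupoid assumption on $\left(N_r\left(B\right)^{\ast}I,+\right)$ together with the near group axiom $(NG_4)$ (which forces the inverse to lie in the group, not merely in its upper approximation) must be used. Once $-x\in I$ is secured, I would simply invoke Theorem~\ref{Th1} to conclude that $I$ is a subnearness ring of $R$, which completes the proof.
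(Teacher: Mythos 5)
Your proposal correctly identifies the natural route (Theorem~\ref{Th1}) and, to your credit, correctly isolates the crux; but the crux is never resolved, and the mechanism you point to cannot resolve it. The axioms of Definition~\ref{Df6} only ever place elements in $N_{r}\left(B\right)^{\ast}I$, never in $I$ itself: from $x,y\in I$ you get $x-y\in N_{r}\left(B\right)^{\ast}I$, and that is all. Even your intermediate claim overreaches: to write $-x=0-x$ and apply the ideal axiom you would need $0\in I$, which is not given (taking $y=x$ yields only $0=x-x\in N_{r}\left(B\right)^{\ast}I$). More importantly, neither ingredient you invoke can promote $-x$ into $I$: axiom $(NG_{4})$ applied to the near group $\left(R,+\right)$ gives only $-x\in R$; applied to $\left(I,+\right)$ it is circular, since $I$ being a near group under \textquotedblleft$+$\textquotedblright\ is precisely what is at stake; and the groupoid hypotheses on $N_{r}\left(B\right)^{\ast}I$ assert closure of the \emph{upper approximation} under the operations, which says nothing about membership in the subset $I$. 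Note also that those groupoid hypotheses (required by Theorem~\ref{Th1}, and by Theorem~\ref{Th001} behind it) are extra assumptions not present in the lemma, so even a successful argument along your lines would prove a weaker statement than the one claimed.

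For comparison: the paper states this lemma without any proof, so there is no argument of theirs to measure yours against; but the gap you flagged is genuine and, with the paper's definitions taken literally, cannot be closed. What the ideal axioms do deliver is the near identity ($0=x-x\in N_{r}\left(B\right)^{\ast}I$) and multiplicative closure ($x\cdot y\in N_{r}\left(B\right)^{\ast}I$, since $x\in I\subseteq R$ lets you read $x\cdot y$ as an instance of $r\cdot y$); but additive closure $x+y\in N_{r}\left(B\right)^{\ast}I$ and the inverse condition $-x\in I$ both require information of the form \textquotedblleft$-y$ belongs to $I$\textquotedblright\ that Definition~\ref{Df6} never supplies, and a one-element ideal $I=\left\{a\right\}$ with $-a\neq a$ would satisfy the definition while violating $(NG_{4})$. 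A correct treatment must either add $-x\in I$ for all $x\in I$ (together with the groupoid assumptions) as a hypothesis, or strengthen the definition of nearness ideal; as it stands, your proposal reduces the lemma to exactly the unprovable step and then asserts it.
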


\begin{example}
\label{Ex3}From Example \ref{Ex1} and \ref{Ex2}, let we consider the nearness
ring $R=\left\{  r,t,w\right\}  $ on $NAS$ and subnearness ring $S=\left\{
r,w\right\}  $ of $R$. We can observe that $x-y\in N_{r}\left(  B\right)
^{\ast}S$, $r\cdot x\in N_{r}\left(  B\right)  ^{\ast}S$ and $x\cdot r\in
N_{r}\left(  B\right)  ^{\ast}S$ for all $x,y\in S$ and for all $r\in R$.
Hence, from Definition \ref{Df6}, $S$ is a nearness ideal of $R$.
\end{example}

\begin{theorem}
Let $R$ be a nearness ring on $NAS$, $I_{1}$ and $I_{2}$ two nearness ideals
of $R$ and $N_{r}\left(  B\right)  ^{\ast}I_{1}$, $N_{r}\left(  B\right)
^{\ast}I_{2}$ groupoids with the binary operations \textquotedblleft%
$+$\textquotedblright\ and \textquotedblleft$\cdot$\textquotedblright. If%
\[
\left(  N_{r}\left(  B\right)  ^{\ast}I_{1}\right)  \cap\left(  N_{r}\left(
B\right)  ^{\ast}I_{2}\right)  =N_{r}\left(  B\right)  ^{\ast}\left(
I_{1}\cap I_{2}\right)  \text{,}%
\]
then $I_{1}\cap I_{2}$ is a nearness ideal of $R$.
\end{theorem}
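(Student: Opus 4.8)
The plan is to mirror the argument of Theorem \ref{Th7}, verifying directly the two defining conditions of a nearness ideal from Definition \ref{Df6} for the set $I_{1}\cap I_{2}$, with the stated hypothesis serving as the single lever that converts membership in the separate upper approximations into membership in $N_{r}\left( B\right)^{\ast}\left( I_{1}\cap I_{2}\right)$.

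First I would check the additive (subtractive) condition. Fix arbitrary $x,y\in I_{1}\cap I_{2}$. Since $x,y\in I_{1}$ and $I_{1}$ is a nearness ideal, $x-y\in N_{r}\left( B\right)^{\ast}I_{1}$; by the same reasoning applied to $I_{2}$, also $x-y\in N_{r}\left( B\right)^{\ast}I_{2}$. Hence $x-y\in\left( N_{r}\left( B\right)^{\ast}I_{1}\right)\cap\left( N_{r}\left( B\right)^{\ast}I_{2}\right)$, and by the hypothesis this intersection equals $N_{r}\left( B\right)^{\ast}\left( I_{1}\cap I_{2}\right)$, so $x-y\in N_{r}\left( B\right)^{\ast}\left( I_{1}\cap I_{2}\right)$.

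Next I would check left and right absorption. Fix $x\in I_{1}\cap I_{2}$ and $r\in R$. Applying the left-ideal property in each $I_{i}$ gives $r\cdot x\in N_{r}\left( B\right)^{\ast}I_{1}$ and $r\cdot x\in N_{r}\left( B\right)^{\ast}I_{2}$, while the right-ideal property gives $x\cdot r\in N_{r}\left( B\right)^{\ast}I_{1}$ and $x\cdot r\in N_{r}\left( B\right)^{\ast}I_{2}$. In each case the element lies in $\left( N_{r}\left( B\right)^{\ast}I_{1}\right)\cap\left( N_{r}\left( B\right)^{\ast}I_{2}\right)=N_{r}\left( B\right)^{\ast}\left( I_{1}\cap I_{2}\right)$. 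Together with the additive condition, this is exactly what Definition \ref{Df6} demands, so $I_{1}\cap I_{2}$ is a two-sided nearness ideal of $R$.

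There is essentially no computational obstacle here; the entire content is carried by the hypothesis, and the hard part is only recognizing why that hypothesis is genuinely needed. For upper approximations one has at best the automatic containment $N_{r}\left( B\right)^{\ast}\left( I_{1}\cap I_{2}\right)\subseteq\left( N_{r}\left( B\right)^{\ast}I_{1}\right)\cap\left( N_{r}\left( B\right)^{\ast}I_{2}\right)$, and the reverse inclusion can fail; without the assumed equality one could not pull $x-y$, $r\cdot x$ and $x\cdot r$ back from the intersection of the two approximations into the approximation of the intersection, which is precisely where $N_{r}\left( B\right)^{\ast}\left( I_{1}\cap I_{2}\right)$-membership must be certified. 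A minor point to acknowledge is the non-emptiness of $I_{1}\cap I_{2}$ required by Definition \ref{Df6}; I would treat this exactly as in Theorem \ref{Th7}, it being guaranteed whenever the two ideals share the additive zero.
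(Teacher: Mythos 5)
Your proposal is correct and takes essentially the same route as the paper's own proof: check the conditions of Definition \ref{Df6} directly for $I_{1}\cap I_{2}$, using the hypothesis $\left(  N_{r}\left(  B\right)  ^{\ast}I_{1}\right)  \cap\left(  N_{r}\left(  B\right)  ^{\ast}I_{2}\right)  =N_{r}\left(  B\right)  ^{\ast}\left(  I_{1}\cap I_{2}\right)  $ as the sole lever that converts membership in both upper approximations into membership in the upper approximation of the intersection. If anything you are slightly more careful than the paper, which writes out only $x-y$ and $r\cdot x$ and leaves the right-sided condition $x\cdot r$ and the non-emptiness of $I_{1}\cap I_{2}$ implicit.
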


\begin{proof}
Suppose $I_{1}$ and $I_{2}$ be two nearness ideals of the nearness ring $R$.
It is obvious that $I_{1}\cap I_{2}\subset R$. Consider $x,y\in I_{1}\cap
I_{2}$. Since $I_{1}$ and $I_{2}$ are nearness ideals, we have $x-y,r\cdot
x\in N_{r}\left(  B\right)  ^{\ast}I_{1}$ and $x-y,r\cdot x\in N_{r}\left(
B\right)  ^{\ast}I_{2}$, i.e., $x-y,r\cdot x\in\left(  N_{r}\left(  B\right)
^{\ast}I_{1}\right)  \cap\left(  N_{r}\left(  B\right)  ^{\ast}I_{2}\right)  $
for all $x,y\in I_{1},I_{2}$ and $r\in R$. Assuming $\left(  N_{r}\left(
B\right)  ^{\ast}I_{1}\right)  \cap\left(  N_{r}\left(  B\right)  ^{\ast}%
I_{2}\right)  =N_{r}\left(  B\right)  ^{\ast}\left(  I_{1}\cap I_{2}\right)
$, we have $x-y,r\cdot x\in N_{r}\left(  B\right)  ^{\ast}\left(  I_{1}\cap
I_{2}\right)  $. From Definition \ref{Df6}, $I_{1}\cap I_{2}$ is a nearness
ideal of $R$.
\end{proof}

\begin{corollary}
Let $R$ be a nearness ring on $NAS$, $\left\{  I_{i}:i\in\Delta\right\}  $ a
nonempty family of nearness ideals of $R$ and $N_{r}\left(  B\right)  ^{\ast
}I_{i}$ groupoids with the binary operations \textquotedblleft$+$%
\textquotedblright\ and \textquotedblleft$\cdot$\textquotedblright. If%
\[%
{\textstyle\bigcap\limits_{i\in\Delta}}
\left(  N_{r}\left(  B\right)  ^{\ast}I_{i}\right)  =N_{r}\left(  B\right)
^{\ast}\left(
{\textstyle\bigcap\limits_{i\in\Delta}}
I_{i}\right)  \text{,}%
\]
then $%
{\textstyle\bigcap\limits_{i\in\Delta}}
I_{i}$ is a nearness ideal of $R$.
\end{corollary}

\bigskip Let $R$ be a nearness ring and $S$ a subnearness ring of $R$. The
left weak equivalence relation (compatible relation) \textquotedblleft%
$\sim_{L}$\textquotedblright\ defined as%
\[
x\sim_{L}y:\Leftrightarrow-x+y\in S\cup\left\{  e\right\}  \text{.}%
\]

A weak class defined by relation \textquotedblleft$\sim_{L}$\textquotedblright%
\ is called left weak coset. The left weak coset that contains the element
$x\in R$ is denoted by $\tilde{x}_{L}$, i.e.,%
\[
\tilde{x}_{L}=\left\{  x+s\mid s\in S,\text{ }x\in R,\text{ }x+s\in R\right\}
\cup\left\{  x\right\}  \text{.}%
\]

Similarly we can define the right weak coset that contains the element $x\in
R$ is denoted by $\tilde{x}_{R}$, i.e.,%
\[
\tilde{x}_{R}=\left\{  s+x\mid s\in S,\text{ }x\in R,\text{ }s+x\in R\right\}
\cup\left\{  x\right\}  \text{.}%
\]
We can easily show that $\tilde{x}_{L}=x+S$ and $\tilde{x}_{R}=S+x$. Since
$(R,+)$ is a abelian near group on $NAS$, $\tilde{x}_{L}=\tilde{x}_{R}$ and so
we use only notation $\tilde{x}$. Then%
\[
R/_{\sim}=\left\{  x+S\mid x\in R\right\}
\]
is a set of all weak cosets of $R$ by $S$. In this case, if we consider
$N_{r}\left(  B\right)  ^{\ast}R$ instead of nearness ring $R$%
\[
\left(  N_{r}\left(  B\right)  ^{\ast}R\right)  /_{\sim}=\left\{  x+S\mid x\in
N_{r}\left(  B\right)  ^{\ast}R\right\}  \text{.}%
\]

\begin{definition}
\label{Df7}\cite{Ozturk2013}Let $R$ be a nearness ring and $S$ be a
subnearness ring of $R$. For $x,y\in R$, let $x+S$ and $y+S$ be two weak
cosets that determined the elements $x$ and $y$, respectively. Then sum of two
weak cosets that determined by $x+y\in N_{r}\left(  B\right)  ^{\ast}R$ can be
defined as%
\small{
\[
\left(  x+y\right)  +S=\left\{  \left(  x+y\right)  +s\mid s\in S,\text{
}x+y\in N_{r}\left(  B\right)  ^{\ast}R,\text{ }\left(  x+y\right)  +s\in
R\right\}  \cup\left\{  x+y\right\}
\]
}
\normalfont

and denoted by%
\[
\left(  x+S\right)  \oplus\left(  y+S\right)  =\left(  x+y\right)  +S\text{.}%
\]

\end{definition}

\begin{definition}
\label{Df8}Let $R$ be a nearness ring and $S$ be a subnearness ring of $R$.
For $x,y\in R$, let $x+S$ and $y+S$ be two weak cosets that determined the
elements $x$ and $y$, respectively. Then product of two weak cosets that
determined by $x\cdot y\in N_{r}\left(  B\right)  ^{\ast}R$ can be defined as%
\small{
\[
\left(  x\cdot y\right)  +S=\left\{  \left(  x\cdot y\right)  +s\mid s\in
S,\text{ }x\cdot y\in N_{r}\left(  B\right)  ^{\ast}R,\text{ }\left(  x\cdot
y\right)  +s\in R\right\}  \cup\left\{  x\cdot y\right\}
\]
}
\normalfont
and denoted by%
\[
\left(  x+S\right)  \odot\left(  y+S\right)  =\left(  x\cdot y\right)
+S\text{.}%
\]

\end{definition}

\begin{definition}
\label{Df9}Let $R/_{\sim}$ be a set of all weak cosets of $R$ by $S$,
$\xi_{\Phi}\left(  A\right)  $ a descriptive nearness collections and
$A\in\mathcal{P}\left(  \mathcal{O}\right)  $. Then%
\[
N_{r}\left(  B\right)  ^{\ast}\left(  R/_{\sim}\right)  =\bigcup_{\xi_{\Phi
}\left(  A\right)  \text{ }\underset{\Phi}{\cap}\text{ }R/_{\sim}\neq
\emptyset}\xi_{\Phi}\left(  A\right)
\]
is called upper approximation of $R/_{\sim}$.
\end{definition}

\begin{theorem}
\label{Th5}Let $R$ be a nearness ring, $S$ a subnearness ring of $R$ and
$R/_{\sim}$ be a set of all weak cosets of $R$ by $S$. If $\left(
N_{r}\left(  B\right)  ^{\ast}R\right)  /_{\sim}\subseteq N_{r}\left(
B\right)  ^{\ast}\left(  R/_{\sim}\right)  $, then $R/_{\sim}$ is a nearness
ring under the operations given by $\left(  x+S\right)  \oplus\left(
y+S\right)  =\left(  x+y\right)  +S$ and $\left(  x+S\right)  \odot\left(
y+S\right)  =\left(  x\cdot y\right)  +S$ for all $x,y\in R$.
\end{theorem}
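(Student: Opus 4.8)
The plan is to verify, one at a time, the three defining conditions $(NR_1)$--$(NR_3)$ of Definition \ref{Df4} for the triple $(R/_{\sim}, \oplus, \odot)$, exploiting the fact that each condition is inherited from the corresponding structure already present in $N_r(B)^{\ast}R$. For the additive condition $(NR_1)$ I would not reprove anything from scratch. Since $R$ is a nearness ring, $(R,+)$ is an abelian near group on $NAS$, and the subnearness ring $(S,+)$ is a subnear group of it; because $(R,+)$ is abelian the left and right weak cosets coincide, so $R/_{\sim}$ is exactly the set of weak cosets to which Theorem \ref{Th0} applies. Reading that theorem with its operation ``$\cdot$'' replaced by ``$+$'' and its coset operation ``$\odot$'' replaced by ``$\oplus$'', and noting that the hypothesis $(N_r(B)^{\ast}R)/_{\sim} \subseteq N_r(B)^{\ast}(R/_{\sim})$ is precisely the hypothesis required there, I obtain immediately that $(R/_{\sim}, \oplus)$ is a near group; commutativity of $\oplus$ follows from commutativity of $+$, so $(R/_{\sim}, \oplus)$ is an abelian near group and $(NR_1)$ holds.

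Next I would establish $(NR_2)$, that $(R/_{\sim}, \odot)$ is a near semigroup. For closure, given weak cosets $x+S$ and $y+S$, their product is $(x\cdot y)+S$ by Definition \ref{Df8}; since $R$ is a near semigroup under ``$\cdot$'' we have $x\cdot y \in N_r(B)^{\ast}R$, hence $(x\cdot y)+S \in (N_r(B)^{\ast}R)/_{\sim}$, and the hypothesis $(N_r(B)^{\ast}R)/_{\sim} \subseteq N_r(B)^{\ast}(R/_{\sim})$ then places $(x\cdot y)+S$ in $N_r(B)^{\ast}(R/_{\sim})$. For associativity, unwinding Definition \ref{Df8} twice gives $((x+S)\odot(y+S))\odot(z+S) = ((x\cdot y)\cdot z)+S$ and $(x+S)\odot((y+S)\odot(z+S)) = (x\cdot(y\cdot z))+S$; since the associative law for ``$\cdot$'' holds in $N_r(B)^{\ast}R$ by $(NR_2)$ for $R$, the two representatives agree there and hence determine the same weak coset, giving associativity in $N_r(B)^{\ast}(R/_{\sim})$.

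Then I would verify the distributive laws $(NR_3)$. Computing both sides from Definitions \ref{Df7} and \ref{Df8}, one finds $(x+S)\odot\left((y+S)\oplus(z+S)\right) = \left(x\cdot(y+z)\right)+S$, while $\left((x+S)\odot(y+S)\right)\oplus\left((x+S)\odot(z+S)\right) = \left((x\cdot y)+(x\cdot z)\right)+S$; the left distributive law $x\cdot(y+z) = (x\cdot y)+(x\cdot z)$ holds in $N_r(B)^{\ast}R$ by $(NR_3)$ for $R$, so the two cosets coincide. The right distributive law is handled identically. Collecting $(NR_1)$--$(NR_3)$ yields that $R/_{\sim}$ is a nearness ring on $NAS$.

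I expect the delicate point to be the closure step in $(NR_2)$, together with the implicit well-definedness of $\oplus$ and $\odot$ on representatives: everything rests on the single hypothesis $(N_r(B)^{\ast}R)/_{\sim} \subseteq N_r(B)^{\ast}(R/_{\sim})$, which is exactly what allows a product coset built from a representative $x\cdot y$ lying only in $N_r(B)^{\ast}R$ to be recognized as an element of the upper approximation $N_r(B)^{\ast}(R/_{\sim})$. The associativity and distributivity steps are then purely formal, since they merely transport identities already guaranteed to hold in the ambient structure $N_r(B)^{\ast}R$ down to equalities of the corresponding weak cosets.
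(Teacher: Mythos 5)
Your proposal is correct and follows essentially the same route as the paper's own proof: $(NR_1)$ is obtained by invoking Theorem \ref{Th0} for the additive abelian near group structure, while $(NR_2)$ (closure plus associativity) and $(NR_3)$ (both distributive laws) are verified by direct coset computations that transport the corresponding identities from $N_r(B)^{\ast}R$ to $(N_r(B)^{\ast}R)/_{\sim}$ and then, via the hypothesis $(N_r(B)^{\ast}R)/_{\sim}\subseteq N_r(B)^{\ast}(R/_{\sim})$, into $N_r(B)^{\ast}(R/_{\sim})$. Your closing observation about where the hypothesis does its work matches how the paper deploys it at each of the three stages.
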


\begin{proof}
$(NR1)$ Let $\left(  N_{r}\left(  B\right)  ^{\ast}R\right)  /_{\sim}\subseteq
N_{r}\left(  B\right)  ^{\ast}\left(  R/_{\sim}\right)  $. Since $R$ is a
nearness ring from Theorem \ref{Th0}, $\left(  R/_{\sim},\oplus\right)  $ is a
abelian near group of all weak cosets of $R$ by $S$.

$(NR2)$ Since $\left(  R,\cdot\right)  $ is a near semigroup;

$\qquad(NS1)$ We have that $x\cdot y\in N_{r}\left(  B\right)  ^{\ast}R$ and
$\left(  x+S\right)  \odot\left(  y+S\right)  =\left(  x\cdot y\right)  +S$
$\in\left(  N_{r}\left(  B\right)  ^{\ast}R\right)  /_{\sim}$ for all $\left(
x+S\right)  ,\left(  y+S\right)  \in R/_{\sim}$. From the \qquad hypothesis,
$\left(  x+S\right)  \odot \left(  y+S\right)  =\left(  x\cdot y\right)  +S$
$\in N_{r}\left(  B\right)  ^{\ast}\left(  R/_{\sim}\right)  $ for all
$\left(  x+S\right)  ,\left(  y+S\right)  \in R/_{\sim}$.

$\qquad(NS2)$ For all $x,y,z\in R/_{\sim}$, associative property hols in
$N_{r}\left(  B\right)  ^{\ast}R$. Hence for all $\left(  x+S\right)  ,\left(
y+S\right)  ,\left(  z+S\right)  \in R/_{\sim}$%

\begin{tabular}
[c]{ll}
& $\left(  \left(  x+S\right)  \odot\left(  y+S\right)  \right)  \odot\left(
z+S\right)  $\\
$=$ & $\left(  \left(  x\cdot y\right)  +S\right)  \odot\left(  z+S\right)
=\left(  \left(  x\cdot y\right)  \cdot z\right)  +S$\\
$=$ & $\left(  x\cdot\left(  y\cdot z\right)  \right)  +S=\left(  x+S\right)
\odot\left(  \left(  y\cdot z\right)  +S\right)  $\\
$=$ & $\left(  x+S\right)  \odot\left(  \left(  y+S\right)  \odot\left(
z+S\right)  \right)  $%
\end{tabular}

holds in $\left(  N_{r}\left(  B\right)  ^{\ast}R\right)  /_{\sim}$. From the
hypothesis, for all $\left(  x+S\right)$, $\left(  y+S\right)$, $\left(
z+S\right)  \in R/_{\sim}$, associative property holds in $N_{r}\left(
B\right)  ^{\ast}\left(  R/_{\sim}\right)  $. Therefore $\left(  R/_{\sim
},\odot\right)  $ is a near semigroup of all left weak cosets of $R$ by $S$.

$(NR3)$ Since $R$ is a nearness ring, left distributive law holds in
$N_{r}\left(  B\right)  ^{\ast}R$. For all $\left(  x+S\right)  ,\left(
y+S\right)  ,\left(  z+S\right)  \in R/_{\sim}$,%

\begin{tabular}
[c]{ll}
& $\left(  x+S\right)  \odot\left(  \left(  y+S\right)  \oplus\left(
z+S\right)  \right)  $\\
$=$ & $\left(  x+S\right)  \odot\left(  \left(  y+z\right)  +S\right)  $\\
$=$ & $\left(  x\cdot\left(  y+z\right)  \right)  +S=\left(  \left(  x\cdot
y\right)  +\left(  x\cdot z\right)  \right)  +S$\\
$=$ & $\left(  \left(  x\cdot y\right)  +S\right)  \oplus\left(  \left(
x\cdot z\right)  +S\right)  $\\
$=$ & $\left(  \left(  x+S\right)  \odot\left(  y+S\right)  \right)
\oplus\left(  \left(  x+S\right)  \odot\left(  z+S\right)  \right)  $.
\end{tabular}

Hence left distributive law holds in $\left(  N_{r}\left(  B\right)  ^{\ast
}R\right)  /_{\sim}$. Similarly we can show that right distributive law holds
in $\left(  N_{r}\left(  B\right)  ^{\ast}R\right)  /_{\sim}$,

$\left(  \left(  x+S\right)  \oplus\left(  y+S\right)  \right)  \odot\left(
z+S\right)  =\left(  \left(  x+S\right)  \odot\left(  z+S\right)  \right)
\oplus\left(  \left(  x+S\right)  \odot\left(  z+S\right)  \right)  $ for all
$\left(  x+S\right)  ,\left(  y+S\right)  ,\left(  z+S\right)  \in R/_{\sim}$.

From the hypothesis, distributive laws hold in $N_{r}\left(  B\right)  ^{\ast
}\left(  R/_{\sim}\right)  $. Consequently, $R/_{\sim}$ is a nearness ring.
\end{proof}

\begin{definition}
Let $R$ be a nearness ring and $S$ be a subnearness ring of $R$. The nearness
ring $R/_{\sim}$ is called a nearness ring of all weak cosets of $R$ by $S$
and denoted by $R/_{w}S$.
\end{definition}

\begin{example}
\label{Ex4}Let $S=\left\{  r,w\right\}  $ be a subset of $R=\left\{
r,t,w\right\}  $. From Example \ref{Ex2}, $S$ is a subnearness ring of
nearness ring $R$.

Now, we can compute the all weak cosets of $R$ by $S$. By using the definition
of weak coset,%
\begin{align*}
\text{ }r+S  & =\left\{  r\right\}  \cup\left\{  r\right\}  =\left\{
r\right\}  \text{, }t+S=\left\{  w,r\right\}  \cup\left\{  t\right\}
=\left\{  w,r,t\right\}  \text{, }\\
w+S  & =\left\{  t\right\}  \cup\left\{  w\right\}  =\left\{  t,w\right\}
\text{.}%
\end{align*}

Thus we have that $R/_{w}S=\left\{  r+S,t+S,w+S\right\}  $.

Since $N_{1}\left(  B\right)  ^{\ast}R=\left\{  o,r,t,w\right\}  $, we can
write the all weak cosets of $N_{1}\left(  B\right)  ^{\ast}R$ by $S$. In this
case%

\begin{center}
$o+S=\left\{  r,w\right\}  \cup\left\{  o\right\}  =\left\{  r,w,o\right\}.$
\end{center}

Then $\left(  N_{1}\left(  B\right)  ^{\ast}R \right)  /_{\sim}=\left\{ o+S,r+S,t+S,w+S \right\}$ $\subset \mathcal{P}\left(  \mathcal{O}\right)$.

Let \textquotedblleft$\oplus$\textquotedblright\ and \textquotedblleft $ \odot $ \textquotedblright\  be operations on $R/_{w}S$, by using the Definition
\ref{Df7} and \ref{Df8}, as in Tables 9 and 10.%

\[%
\begin{array}
[c]{c|ccc}%
\oplus & r+S & t+S & w+S\\ \hline
r+S & t+S & w+S & o+S\\
t+S & w+S & o+S & r+S\\
w+S & o+S & r+S & t+S
\end{array}
\text{ \ \ \ \ \ \ \ \ \ \ \ \ \ \ \ \ }%
\begin{array}
[c]{c|ccc}%
\odot & r+S & t+S & w+S\\ \hline
r+S & t+S & o+S & t+S \\
t+S & o+S & o+S & o+S\\
w+S & t+S & o+S & t+S
\end{array}
\]%

\[
\text{\ \ \ \ \ \ }Table\text{ }9.\text{
\ \ \ \ \ \ \ \ \ \ \ \ \ \ \ \ \ \ \ \ \ \ \ \ \ \ \ \ \ \ \ \ \ \ \ \ \ \ \ \ }%
Table\text{ }10.
\]

It is enough to show that every element of $\left(  N_{1}\left(  B\right)
^{\ast}R\right)  /_{\sim}$ is also an element of $N_{1}\left(  B\right)
^{\ast}\left(  R/_{w}S\right)  $ in order to ensure $\left(  N_{r}\left(
B\right)  ^{\ast}R\right)  /_{\sim}\subseteq N_{r}\left(  B\right)  ^{\ast
}\left(  R/_{w}S\right)  $.%

\begin{align*}
\mathcal{Q}(R/_{w}S)  &  =\{\Phi(A)\mid A\in R/_{w}S\}\\
&  =\left\{  \Phi\left(  r+S\right)  ,\Phi\left(  t+S\right)  ,\Phi\left(
w+S\right)  \right\} \\
&  =\left\{  \left\{  \Phi\left(  r\right)  \right\}  ,\left\{  \Phi\left(
w\right)  ,\Phi\left(  r\right)  ,\Phi\left(  t\right)  \right\}  ,\left\{
\Phi\left(  t\right)  ,\Phi\left(  w\right)  \right\}  \right\} \\
&  =\{\left\{  \left(  \alpha_{1},\beta_{2}\right)  \right\}  ,\left\{
\left(  \alpha_{4},\beta_{1}\right)  ,\left(  \alpha_{2},\beta_{1}\right)
,\left(  \alpha_{1},\beta_{2}\right)  \right\}  ,\left\{  \left(  \alpha
_{1},\beta_{2}\right)  ,\left(  \alpha_{4},\beta_{1}\right)  \right\}
\}\text{.}%
\end{align*}

For $r+S\in R/_{w}S$, we get that%
\begin{align*}
\mathcal{Q}\left(  r+S\right)   &  =\left\{  \Phi\left(  r\right)  \right\}
=\left\{  \left(  \alpha_{1},\beta_{2}\right)  \right\}  \text{,}\\
\mathcal{Q}\left(  o+S\right)   &  =\left\{  \Phi\left(  r\right)
,\Phi\left(  w\right)  ,\Phi\left(  o\right)  \right\}  =\left\{  \left(
\alpha_{1},\beta_{2}\right)  ,\left(  \alpha_{4},\beta_{1}\right)  ,\left(
\alpha_{4},\beta_{1}\right)  \right\}  \text{.}%
\end{align*}

Since $\mathcal{Q}\left(  r+S\right)  \cap\mathcal{Q}\left(  o+S\right)
=\left\{  \left(  \alpha_{1},\beta_{2}\right)  \right\}  \neq\emptyset$, it
follows that $o+S\in\xi_{\Phi}\left(  r+S\right)  $. Hence $\xi_{\Phi}\left(
r+S\right)  \underset{\Phi}{\cap}R/_{w}S\neq\emptyset$ and $r+S,o+S\in
N_{1}\left(  B\right)  ^{\ast}\left(  R/_{w}S\right)  $ by Definition
\ref{Df9}.

For $t+S\in R/_{w}S$, $w+S$ we get that%
\begin{align*}
\mathcal{Q}\left(  t+S\right)   &  =\left\{  \left\{  \Phi\left(  w\right)
,\Phi\left(  r\right)  ,\Phi\left(  t\right)  \right\}  \right\}  =\left\{
\left(  \alpha_{4},\beta_{1}\right)  ,\left(  \alpha_{2},\beta_{1}\right)
,\left(  \alpha_{1},\beta_{2}\right)  \right\}  \text{,}\\
\mathcal{Q}\left(  w+S\right)   &  =\left\{  \left\{  \Phi\left(  t\right)
,\Phi\left(  w\right)  \right\}  \right\}  =\left\{  \left(  \alpha_{1}%
,\beta_{2}\right)  ,\left(  \alpha_{4},\beta_{1}\right)  \right\}  \text{.}%
\end{align*}

Since $\mathcal{Q}\left(  t+S\right)  \cap\mathcal{Q}\left(  t+S\right)
=\left\{  \left(  \alpha_{4},\beta_{1}\right)  ,\left(  \alpha_{2},\beta
_{1}\right)  ,\left(  \alpha_{1},\beta_{2}\right)  \right\}  \neq\emptyset$
and $\mathcal{Q}\left(  w+S\right)  \cap\mathcal{Q}\left(  w+S\right)
=\left\{  \left(  \alpha_{1},\beta_{2}\right)  ,\left(  \alpha_{4},\beta
_{1}\right)  \right\}  \neq\emptyset$, it follows that $t+S\in\xi_{\Phi
}\left(  t+S\right)  ,w+S\in\xi_{\Phi}\left(  w+S\right)  $. Hence $\xi_{\Phi
}\left(  t+S\right)  \underset{\Phi}{\cap}R/_{w}S\neq\emptyset,$ $\xi_{\Phi
}\left(  w+S\right)  \underset{\Phi}{\cap}R/_{w}S\neq\emptyset$ and
$t+S,w+S\in N_{1}\left(  B\right)  ^{\ast}\left(  R/_{w}S\right)  $ by
Definition \ref{Df9}.

Consequently, $\left(  N_{r}\left(  B\right)  ^{\ast}R\right)  /_{\sim_{L}%
}\subseteq N_{r}\left(  B\right)  ^{\ast}\left(  R/_{w}S\right)  $.

Thus, from the Theorem \ref{Th5}, $R/_{w}S$ is a nearness ring of all weak
cosets of $R$ by $S$ with the operations given by Tables 9 and 10.
\end{example}

\begin{definition}
\label{Df20}Let $R_{1},R_{2}\subset\mathcal{O}$ be two nearness rings and
$\eta$ a mapping from $N_{r}\left(  B\right)  ^{\ast}R_{1}$ onto $N_{r}\left(
B\right)  ^{\ast}R_{2}$. If $\eta\left(  x+y\right)  =\eta\left(  x\right)
+\eta\left(  y\right)  $ and $\eta\left(  x\cdot y\right)  =\eta\left(
x\right)  \cdot\eta\left(  y\right)  $ for all $x,y\in R_{1}$, then $\eta$ is
called a nearness ring homomorphism and also, $R_{1}$ is called near
homomorphic to $R_{2}$, denoted by $R_{1}\simeq_{n}R_{2}$.

A nearness ring homomorphism $\eta$ of $N_{r}\left(  B\right)  ^{\ast}R_{1}$
into $N_{r}\left(  B\right)  ^{\ast}R_{2}$ is called

(i) a nearness momomorphism if $\eta$ is one-one,

(ii) a nearness epimorphism if $\eta$ is onto $N_{r}\left(  B\right)  ^{\ast
}R_{2}$ and

(iii) a nearness isomorphism if $\eta$ is one-one and maps $N_{r}\left(
B\right)  ^{\ast}R_{1}$ onto $N_{r}\left(  B\right)  ^{\ast}R_{2}$.
\end{definition}

\begin{theorem}
\label{Th6}Let $R_{1},R_{2}$ be two nearness rings and $\eta$ a nearness
homomorphism of $N_{r}\left(  B\right)  ^{\ast}R_{1}$ into $N_{r}\left(
B\right)  ^{\ast}R_{2}$. Then the following properties hold.

(i) $\eta\left(  0_{R_{1}}\right)  =0_{R_{2}}$, where $0_{R_{2}}\in
N_{r}\left(  B\right)  ^{\ast}R_{2}$ is the nearness zero of $R_{2}$.

(ii) $\eta\left(  -x\right)  =-\eta\left(  x\right)  $ for all $x\in R_{1}$.
\end{theorem}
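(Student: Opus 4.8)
The plan is to follow the classical ring-homomorphism argument, transposed into the nearness setting, leaning on the fact that the additive reduct $(R_{i},+)$ of each nearness ring is an abelian near group (axiom $NR_{1}$) and that $\eta$ respects addition by Definition \ref{Df20}. Throughout I would treat the additive structure of $N_{r}\left(B\right)^{\ast}R_{2}$ as an ordinary abelian group, as the remark following Definition \ref{Df4} licenses once the relevant upper approximations are groupoids; this is precisely what makes the cancellation step available.

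For (i), I would start from the idempotency of the near zero: since $0_{R_{1}}$ is the near identity of the near group $(R_{1},+)$, axiom $NG_{3}$ gives $0_{R_{1}}+0_{R_{1}}=0_{R_{1}}$. Applying $\eta$ and using $\eta\left(x+y\right)=\eta\left(x\right)+\eta\left(y\right)$ yields $\eta\left(0_{R_{1}}\right)=\eta\left(0_{R_{1}}\right)+\eta\left(0_{R_{1}}\right)$ inside $N_{r}\left(B\right)^{\ast}R_{2}$. Because $0_{R_{2}}$ is the near zero of $R_{2}$ we also have $\eta\left(0_{R_{1}}\right)+0_{R_{2}}=\eta\left(0_{R_{1}}\right)$, so the two right-hand sides coincide and left cancellation (Theorem \ref{Th02}(iii), read for the additive operation) forces $\eta\left(0_{R_{1}}\right)=0_{R_{2}}$.

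For (ii), I would use the near inverse guaranteed by axiom $NG_{4}$: for each $x\in R_{1}$ the element $-x$ lies in $R_{1}$ and satisfies $x+\left(-x\right)=0_{R_{1}}$. Since both $x$ and $-x$ belong to $R_{1}$, the additive homomorphism property applies directly, giving $\eta\left(x\right)+\eta\left(-x\right)=\eta\left(x+\left(-x\right)\right)=\eta\left(0_{R_{1}}\right)$, and by part (i) this equals $0_{R_{2}}$. Thus $\eta\left(-x\right)$ is a near inverse of $\eta\left(x\right)$ in $N_{r}\left(B\right)^{\ast}R_{2}$; invoking uniqueness of the near inverse (Theorem \ref{Th01}(ii)) identifies it as $-\eta\left(x\right)$.

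The main obstacle lies entirely in part (i): unlike $-x$, the near zero $0_{R_{1}}$ lives in $N_{r}\left(B\right)^{\ast}R_{1}$ and need not belong to $R_{1}$ itself, so applying the homomorphism identity and the cancellation law to it requires that $\eta$ and these operations behave on the ambient upper approximations exactly as in an ordinary ring. I would make this dependence explicit by working throughout in $N_{r}\left(B\right)^{\ast}R_{1}$ and $N_{r}\left(B\right)^{\ast}R_{2}$ regarded as ordinary rings, after which (ii) follows mechanically from (i).
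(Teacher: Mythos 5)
Your proposal is correct and takes essentially the same route as the paper's own proof: for (i) the paper likewise applies $\eta$ to the idempotency relation $0_{R_{1}}+0_{R_{1}}=0_{R_{1}}$ (written there in the generic near-group operation notation) and cancels via Theorem \ref{Th02}(iii), and for (ii) it likewise obtains $\eta\left(x\right)+\eta\left(-x\right)=0_{R_{2}}=\eta\left(-x\right)+\eta\left(x\right)$ and concludes by uniqueness of near inverses from Theorem \ref{Th01}(ii). If anything, you are more explicit than the paper about the caveat that $0_{R_{1}}$ may lie only in $N_{r}\left(B\right)^{\ast}R_{1}$ rather than in $R_{1}$, so that the homomorphism identity and the cancellation must be read in the upper approximations regarded as ordinary structures.
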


\begin{proof}
(i) Since $\eta$ is a nearness homomorphism, $\eta\left(  0_{R_{1}}\right)
\cdot\eta\left(  0_{R_{1}}\right)  =\eta\left(  0_{R_{1}}\cdot0_{R_{1}%
}\right)  =\eta\left(  0_{R_{1}}\right)  =\eta\left(  0_{R_{1}}\right)
\cdot0_{R_{2}}$. Thus we have that $\eta\left(  0_{R_{1}}\right)  =0_{R_{2}}$
by the Theorem \ref{Th02}.(iii).

(ii) Let $x\in R_{1}$. Then $\eta\left(  x\right)  \cdot\eta\left(  -x\right)
=\eta\left(  x-x\right)  =\eta\left(  0_{R_{1}}\right)  =0_{R_{2}}$. Similarly
we can obtain that $\eta\left(  -x\right)  \cdot\eta\left(  x\right)
=0_{R_{2}}$ for all $x\in R_{1}$. From Theorem \ref{Th01}.(ii), since
$\eta\left(  x\right)  $ has a unique inverse, $\eta\left(  -x\right)
=-\eta\left(  x\right)  $ for all $x\in R_{1}$.
\end{proof}

\begin{theorem}
\label{Th07}Let $R_{1},R_{2}$ be two nearness rings and $\eta$ a nearness
homomorphism of $N_{r}\left(  B\right)  ^{\ast}R_{1}$ into $N_{r}\left(
B\right)  ^{\ast}R_{2}$ and $N_{r}\left(  B\right)  ^{\ast}S$ a groupoid. Then
the following properties hold.

(i) If $S$ is a subnearness ring of nearness ring $R_{1}$ and $\eta\left(
N_{r}\left(  B\right)  ^{\ast}S\right)  =N_{r}\left(  B\right)  ^{\ast}%
\eta\left(  S\right)  $, then $\eta\left(  S\right)  =\left\{  \eta\left(
x\right)  :x\in S\right\}  $ is a subnearness ring of $R_{2}$.

(ii) If $S$ is a commutative subnearness ring $R_{1}$ and $\eta\left(
N_{r}\left(  B\right)  ^{\ast}S\right)  =N_{r}\left(  B\right)  ^{\ast}%
\eta\left(  S\right)  $, then $\eta\left(  S\right)  $ is a commutative
nearness ring of $R_{2}$.
\end{theorem}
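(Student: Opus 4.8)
The plan is to prove each part directly from the definition of subnearness ring (Definition \ref{Df5}) together with the characterization supplied by Theorem \ref{Th1}. Since $\eta$ is a nearness ring homomorphism from $N_{r}\left(B\right)^{\ast}R_{1}$ into $N_{r}\left(B\right)^{\ast}R_{2}$, it respects both operations, and by Theorem \ref{Th6}.(ii) it also sends additive inverses to additive inverses. The hypothesis $\eta\left(N_{r}\left(B\right)^{\ast}S\right)=N_{r}\left(B\right)^{\ast}\eta\left(S\right)$ is exactly what lets me transport the upper-approximation conditions through $\eta$, so the bulk of the work is bookkeeping around this identity.

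For part (i), I would first observe that $\eta\left(S\right)$ is a nonempty subset of $R_{2}$ since $S\subseteq R_{1}$ and $\eta$ maps into $N_{r}\left(B\right)^{\ast}R_{2}$. To invoke Theorem \ref{Th1}, I must verify that $\left(N_{r}\left(B\right)^{\ast}\eta\left(S\right),+\right)$ and $\left(N_{r}\left(B\right)^{\ast}\eta\left(S\right),\cdot\right)$ are groupoids and that $-y\in\eta\left(S\right)$ for every $y\in\eta\left(S\right)$. The groupoid property for $N_{r}\left(B\right)^{\ast}\eta\left(S\right)$ should follow by pushing the groupoid structure of $N_{r}\left(B\right)^{\ast}S$ through $\eta$ and using $\eta\left(N_{r}\left(B\right)^{\ast}S\right)=N_{r}\left(B\right)^{\ast}\eta\left(S\right)$ together with the homomorphism identities $\eta\left(a+b\right)=\eta\left(a\right)+\eta\left(b\right)$ and $\eta\left(a\cdot b\right)=\eta\left(a\right)\cdot\eta\left(b\right)$. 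For the inverse condition, take $y\in\eta\left(S\right)$, write $y=\eta\left(x\right)$ with $x\in S$; since $S$ is a subnearness ring, $-x\in S$ by Theorem \ref{Th1}, and then $-y=-\eta\left(x\right)=\eta\left(-x\right)\in\eta\left(S\right)$ by Theorem \ref{Th6}.(ii). Theorem \ref{Th1} then yields that $\eta\left(S\right)$ is a subnearness ring of $R_{2}$.

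For part (ii), I would simply add the commutativity check on top of part (i). Given $S$ commutative, take any two elements $\eta\left(x\right),\eta\left(y\right)\in\eta\left(S\right)$ with $x,y\in S$; then $\eta\left(x\right)\cdot\eta\left(y\right)=\eta\left(x\cdot y\right)=\eta\left(y\cdot x\right)=\eta\left(y\right)\cdot\eta\left(x\right)$, where the middle equality uses commutativity of $S$ and the outer equalities use the multiplicative homomorphism property. Since part (i) already gives that $\eta\left(S\right)$ is a subnearness ring (hence a nearness ring in its own right), adjoining this commutativity makes $\eta\left(S\right)$ a commutative nearness ring of $R_{2}$.

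The main obstacle I anticipate is verifying the groupoid closure of $N_{r}\left(B\right)^{\ast}\eta\left(S\right)$ under both operations, because this is the place where the hypothesis $\eta\left(N_{r}\left(B\right)^{\ast}S\right)=N_{r}\left(B\right)^{\ast}\eta\left(S\right)$ must be used carefully: one needs that an element of $N_{r}\left(B\right)^{\ast}\eta\left(S\right)$ is genuinely of the form $\eta\left(z\right)$ for some $z\in N_{r}\left(B\right)^{\ast}S$ so that the homomorphism identities can be applied, and then that the resulting sum or product lands back in $N_{r}\left(B\right)^{\ast}\eta\left(S\right)$. Everything else — the inverse condition and the commutativity transfer — is a routine application of the homomorphism identities and Theorem \ref{Th6}.(ii).
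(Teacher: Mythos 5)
Your proposal is correct and follows essentially the same route as the paper's own proof: both parts reduce (i) to Theorem \ref{Th1}, using Theorem \ref{Th6}.(ii) to carry additive inverses across $\eta$, and both prove (ii) by transferring commutativity through the identity $\eta\left(x\right)\cdot\eta\left(y\right)=\eta\left(x\cdot y\right)=\eta\left(y\cdot x\right)=\eta\left(y\right)\cdot\eta\left(x\right)$. If anything, your write-up is slightly tighter than the paper's: you land $-\eta\left(x\right)=\eta\left(-x\right)$ in $\eta\left(S\right)$ itself, which is what Theorem \ref{Th1} literally requires (the paper only places it in $N_{r}\left(B\right)^{\ast}\eta\left(S\right)$), and you explicitly flag the groupoid verification for $N_{r}\left(B\right)^{\ast}\eta\left(S\right)$, a step the paper passes over in silence.
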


\begin{proof}
(i) Let $S$ be a subnearness ring of nearness ring $R_{1}$. Then $0_{S}\in
N_{r}\left(  B\right)  ^{\ast}S$ and by Theorem \ref{Th6}.(i), $\eta\left(
0_{S}\right)  =0_{R_{2}}$, where $0_{R_{2}}\in N_{r}\left(  B\right)  ^{\ast
}R_{2}$. Thus, $0_{R_{2}}=\eta\left(  0_{S}\right)  \in\eta\left(
N_{r}\left(  B\right)  ^{\ast}S\right)  =N_{r}\left(  B\right)  ^{\ast}%
\eta\left(  S\right)  .$This means that $\eta\left(  S\right)  \neq\emptyset$.
Let $\eta\left(  x\right)  \in\eta\left(  S\right)  $, where $x\in S$. Since
$S$ is a subnearness ring of $R_{1}$, $-x\in N_{r}\left(  B\right)  ^{\ast}S$
for all $x\in S$. Thus $-\eta\left(  x\right)  =\eta\left(  -x\right)  \in
\eta\left(  N_{r}\left(  B\right)  ^{\ast}S\right)  =N_{r}\left(  B\right)
^{\ast}\eta\left(  S\right)  $ for all $\eta\left(  x\right)  \in\eta\left(
S\right)  $. Hence by Theorem \ref{Th1}, $\eta\left(  S\right)  $ is
subnearness ring of $R_{2}$.

(ii) Let $S$ be a commutative subnearness ring and $\eta\left(  x\right)
,\eta\left(  y\right)  \in\eta\left(  S\right)  $. We have that $\eta\left(
S\right)  $ is a subnearness ring of $R_{2}$ by (i), i.e., $\eta\left(
S\right)  $ is a nearness ring. Then $\eta\left(  x\right)  \cdot\eta\left(
y\right)  =\eta\left(  x\cdot y\right)  =\eta\left(  y\cdot x\right)
=\eta\left(  y\right)  \cdot\eta\left(  x\right)  $ for all $\eta\left(
x\right)  ,\eta\left(  y\right)  \in\eta\left(  R_{1}\right)  $. Hence
$\eta\left(  S\right)  $ is commutative subnearness ring of $R_{2}$.
\end{proof}

\begin{definition}
Let $R_{1},R_{2}$ be two nearness rings and $\eta$ be a nearness homomorphism
of $N_{r}\left(  B\right)  ^{\ast}R_{1}$ into $N_{r}\left(  B\right)  ^{\ast
}R_{2}$. The kernel of $\eta$, denoted by $Ker\eta$, is defined to be the set%
\[
Ker\eta=\left\{  x\in R_{1}:\eta\left(  x\right)  =0_{R_{2}}\right\}  \text{.}%
\]

\end{definition}

\begin{theorem}
\label{Th03}Let $R_{1},R_{2}$ be two nearness rings, $\eta$ a nearness
homomorphism of $N_{r}\left(  B\right)  ^{\ast}R_{1}$ into $N_{r}\left(
B\right)  ^{\ast}R_{2}$ and $N_{r}\left(  B\right)  ^{\ast}Ker\eta$ a groupoid
with binary operations \textquotedblleft$+$\textquotedblright\ and
\textquotedblleft$\cdot$\textquotedblright. Then $\emptyset\neq Ker\eta$ is a
nearness ideal of $R_{1}$.
\end{theorem}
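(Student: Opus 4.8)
The plan is to verify, for the set $K=Ker\eta$, the defining conditions of a two-sided nearness ideal from Definition \ref{Df6}, organizing the work so that the additive conditions come from the subnearness-ring criterion of Theorem \ref{Th1}. First I would record nonemptiness: $0_{R_{1}}\in R_{1}$ and, by Theorem \ref{Th6}.(i), $\eta\left(0_{R_{1}}\right)=0_{R_{2}}$, so $0_{R_{1}}\in K$ and hence $K\neq\emptyset$ (this is in any case forced, since a groupoid is carried by a nonempty set and $N_{r}\left(B\right)^{\ast}K$ is assumed to be one). Next, for every $x\in K$ the near inverse $-x$ lies in $R_{1}$ because $\left(R_{1},+\right)$ is an abelian near group, and by Theorem \ref{Th6}.(ii) we get $\eta\left(-x\right)=-\eta\left(x\right)=-0_{R_{2}}=0_{R_{2}}$, so $-x\in K$.

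With $-x\in K$ for all $x\in K$ and $N_{r}\left(B\right)^{\ast}K$ a groupoid under both \textquotedblleft$+$\textquotedblright\ and \textquotedblleft$\cdot$\textquotedblright, Theorem \ref{Th1} immediately yields that $K$ is a subnearness ring of $R_{1}$; in particular $x-y=x+\left(-y\right)\in N_{r}\left(B\right)^{\ast}K$ for all $x,y\in K$, which is the additive half of Definition \ref{Df6}. It remains to establish the absorption conditions $r\cdot x\in N_{r}\left(B\right)^{\ast}K$ and $x\cdot r\in N_{r}\left(B\right)^{\ast}K$ for all $r\in R_{1}$ and $x\in K$. Since $R_{1}$ is a near semigroup under \textquotedblleft$\cdot$\textquotedblright\ we have $r\cdot x\in N_{r}\left(B\right)^{\ast}R_{1}$, so $\eta\left(r\cdot x\right)$ is defined; using that $\eta$ is a nearness ring homomorphism together with the ordinary-ring identity $z\cdot 0_{R_{2}}=0_{R_{2}}=0_{R_{2}}\cdot z$ valid in $N_{r}\left(B\right)^{\ast}R_{2}$ regarded as an ordinary ring (compare the Proposition following Example \ref{Ex1}), I would compute $\eta\left(r\cdot x\right)=\eta\left(r\right)\cdot\eta\left(x\right)=\eta\left(r\right)\cdot 0_{R_{2}}=0_{R_{2}}$, and symmetrically $\eta\left(x\cdot r\right)=0_{R_{2}}$.

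The main obstacle is the passage from $\eta\left(r\cdot x\right)=0_{R_{2}}$ to $r\cdot x\in N_{r}\left(B\right)^{\ast}K$. The computation only places $r\cdot x$ in $N_{r}\left(B\right)^{\ast}R_{1}$, and this element need not lie in $R_{1}$, hence need not belong to $K=Ker\eta$ as literally defined; the subtraction step avoided this because there both arguments were already in $K$, so groupoid closure of $N_{r}\left(B\right)^{\ast}K$ applied directly, whereas here $r$ ranges over all of $R_{1}$. I would therefore isolate the membership principle that any $u\in N_{r}\left(B\right)^{\ast}R_{1}$ with $\eta\left(u\right)=0_{R_{2}}$ lies in $N_{r}\left(B\right)^{\ast}K$, arguing through the description-based upper approximation of Table 1: $u$ sits in an equivalence class $\left[u\right]_{B_{r}}$ that, by compatibility of $\eta$ with $\sim_{B_{r}}$, meets $K$, whence $\left[u\right]_{B_{r}}\subseteq N_{r}\left(B\right)^{\ast}K$. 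Once this principle is secured, both absorption conditions follow from the computation above, and by Definition \ref{Df6} the nonempty set $K$ is a two-sided nearness ideal of $R_{1}$.
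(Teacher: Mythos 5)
Your overall route is the same as the paper's: verify the conditions of Definition \ref{Df6} for $K=Ker\eta$ by computing images under $\eta$. Your treatment of the additive half is in fact more careful than the paper's: by first showing $-y\in K$ and then invoking closure of the groupoid $\left(N_{r}\left(B\right)^{\ast}K,+\right)$, you obtain $x+\left(-y\right)\in N_{r}\left(B\right)^{\ast}K$ from the memberships $x,-y\in K\subseteq N_{r}\left(B\right)^{\ast}K$ alone; this actually uses the groupoid hypothesis of the theorem, which the paper states but never visibly invokes. (One small repair: your claim $0_{R_{1}}\in R_{1}$ is not justified, since a near identity is only guaranteed to lie in $N_{r}\left(B\right)^{\ast}R_{1}$, not in $R_{1}$; but your fallback argument --- a groupoid is by definition nonempty, and $N_{r}\left(B\right)^{\ast}K=\emptyset$ whenever $K=\emptyset$ --- does give $K\neq\emptyset$.)

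The genuine gap is exactly where you located it, but your proposed repair does not work. To pass from $\eta\left(r\cdot x\right)=0_{R_{2}}$ to $r\cdot x\in N_{r}\left(B\right)^{\ast}\left(Ker\eta\right)$ you posit a ``membership principle'' resting on ``compatibility of $\eta$ with $\sim_{B_{r}}$.'' No such compatibility exists in this framework: $\eta$ is only assumed to preserve the two operations on elements of $R_{1}$; it bears no stated relation to the probe functions, to object descriptions, or to the classes $\left[\,\cdot\,\right]_{B_{r}}$, so nothing forces the class $\left[r\cdot x\right]_{B_{r}}$ to meet $Ker\eta$. The groupoid trick that saved the additive case is also unavailable here, because $r$ ranges over all of $R_{1}$ and need not lie in $N_{r}\left(B\right)^{\ast}\left(Ker\eta\right)$. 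You should know, however, that the paper's own proof commits precisely the same leap without comment: from $f\left(r\cdot x\right)=0_{R_{2}}$ it concludes ``and so $r\cdot x\in N_{r}\left(B\right)^{\ast}\left(Ker\eta\right)$,'' an inference that is justified only if $r\cdot x\in R_{1}$ (so that it belongs to $Ker\eta$ itself), whereas $r\cdot x$ is only known to lie in $N_{r}\left(B\right)^{\ast}R_{1}$. So what you have isolated is a defect of the published proof (and arguably of the theorem as stated), not one peculiar to your argument; but as a self-contained proof your proposal is incomplete at this step, and no argument from the stated hypotheses appears to close it.
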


\begin{proof}
Let $x,y\in Ker\eta$. Then $f\left(  x-y\right)  =f\left(  x\right)  -f\left(
y\right)  =0_{R_{2}}-0_{R_{2}}=0_{R_{2}}\in N_{r}\left(  B\right)  ^{\ast
}R_{2}$ and so $x-y\in N_{r}\left(  B\right)  ^{\ast}\left(  Ker\eta\right)
$. Let $r\in R_{1}$. Then $f\left(  r\cdot x\right)  =f\left(  r\right)  \cdot
f\left(  x\right)  =f\left(  r\right)  \cdot0_{R_{2}}=0_{R_{2}}\in
N_{r}\left(  B\right)  ^{\ast}R_{2}$ and so $r\cdot x\in N_{r}\left(
B\right)  ^{\ast}\left(  Ker\eta\right)  $. Similarly, $x\cdot r\in$
$N_{r}\left(  B\right)  ^{\ast}\left(  Ker\eta\right)  $. Hence, from
Definition \ref{Df6}, $Ker\eta$ is a nearness ideal of $R_{1}$.
\end{proof}

\begin{theorem}
Let $R$ be a nearness ring and $S$ a subnearness ring of $R$. Then the mapping
$\Pi:N_{r}\left(  B\right)  ^{\ast}R \rightarrow N_{r}\left(  B\right)  ^{\ast
}\left(  R/_{w}S\right)  $ defined by $\Pi\left(  x\right)  =x+S$ for all
$x\in N_{r}\left(  B\right)  ^{\ast}R$ is a nearness homomorphism.
\end{theorem}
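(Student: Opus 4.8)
The plan is to verify directly that $\Pi$ fulfils the two defining conditions of a nearness ring homomorphism in Definition \ref{Df20}, namely compatibility with the additive operation and with the multiplicative operation, where on the target $R/_{w}S$ these operations are $\oplus$ and $\odot$. Before checking these identities I would first confirm that $\Pi$ is well-defined with the stated codomain. For $x\in N_{r}\left(B\right)^{\ast}R$, the element $\Pi\left(x\right)=x+S$ lies in $\left(N_{r}\left(B\right)^{\ast}R\right)/_{\sim}$ by the definition of this quotient; and since writing $R/_{w}S$ presupposes, via Theorem \ref{Th5}, that $R/_{\sim}$ is a nearness ring, the inclusion $\left(N_{r}\left(B\right)^{\ast}R\right)/_{\sim}\subseteq N_{r}\left(B\right)^{\ast}\left(R/_{w}S\right)$ holds. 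Hence $\Pi\left(x\right)\in N_{r}\left(B\right)^{\ast}\left(R/_{w}S\right)$, so $\Pi$ does take values where claimed.

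Next I would check the two algebraic identities. Fix $x,y\in R$. Using the definition of $\Pi$ together with Definition \ref{Df7},
\[
\Pi\left(x+y\right)=\left(x+y\right)+S=\left(x+S\right)\oplus\left(y+S\right)=\Pi\left(x\right)\oplus\Pi\left(y\right).
\]
For the product I would argue identically via Definition \ref{Df8}:
\[
\Pi\left(x\cdot y\right)=\left(x\cdot y\right)+S=\left(x+S\right)\odot\left(y+S\right)=\Pi\left(x\right)\odot\Pi\left(y\right).
\]
By Definition \ref{Df20}, these two equalities are exactly what is required for $\Pi$ to be a nearness ring homomorphism.

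I expect the algebraic verification above to be immediate, since it is no more than unwinding the coset operations $\oplus$ and $\odot$ of Definitions \ref{Df7} and \ref{Df8}. The real point of the argument, and the step I would treat most carefully, is the preliminary well-definedness claim: because $\Pi$ is defined on the entire upper approximation $N_{r}\left(B\right)^{\ast}R$ rather than merely on $R$, one must be certain that the coset attached to an arbitrary element of $N_{r}\left(B\right)^{\ast}R$ actually lands inside the upper approximation $N_{r}\left(B\right)^{\ast}\left(R/_{w}S\right)$, and not just inside the set-theoretic quotient $R/_{\sim}$. This is precisely what the inclusion hypothesis underlying Theorem \ref{Th5} supplies, so that the codomain against which Definition \ref{Df20} is applied is the correct one; matching the \textquotedblleft onto\textquotedblright\ clause of that definition likewise amounts to identifying the image of $\Pi$ with $\left(N_{r}\left(B\right)^{\ast}R\right)/_{\sim}$.
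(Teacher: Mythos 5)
Your proposal is correct and takes essentially the same route as the paper: the paper's proof consists precisely of your two displayed computations unwinding Definitions \ref{Df7} and \ref{Df8}, preceded only by the bare assertion that $\Pi$ is a mapping from $N_{r}\left(B\right)^{\ast}R$ into $N_{r}\left(B\right)^{\ast}\left(R/_{w}S\right)$. Your preliminary justification of that assertion, via the inclusion $\left(N_{r}\left(B\right)^{\ast}R\right)/_{\sim}\subseteq N_{r}\left(B\right)^{\ast}\left(R/_{w}S\right)$ presupposed by the notation $R/_{w}S$, merely makes explicit a point the paper leaves implicit and does not alter the argument.
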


\begin{proof}
From the definition of $\Pi$, $\Pi$ is a mapping from $N_{r}\left(  B\right)
^{\ast}R$ into $N_{r}\left(  B\right)  ^{\ast}\left(  R/_{w}S\right)  $. By
using the Definition \ref{Df8},%

\[
\Pi\left(  x+y\right)  =\left(  x+y\right)  +S=\left(  x+S\right)
\oplus\left(  y+S\right)  =\Pi\left(  x\right)  \oplus\Pi\left(  y\right)
\text{,}%
\]%
\[
\Pi\left(  x\cdot y\right)  =\left(  x\cdot y\right)  +S=\left(  x+S\right)
\odot\left(  y+S\right)  =\Pi\left(  x\right)  \odot\Pi\left(  y\right)
\]

for all $x,y\in R$. Thus $\Pi$ is a nearness homomorphism from Definition
\ref{Df20}.
\end{proof}

\begin{definition}
The near homomorphism $\Pi$ is called a nearness natural homomorphism from
$N_{r}\left(  B\right)  ^{\ast}R$ into $N_{r}\left(  B\right)  ^{\ast}\left(
R/_{w}S\right)  $.
\end{definition}

\begin{example}
From Example \ref{Ex4}, we consider the nearness ring of all weak cosets
$R/_{w}S$. Define%
\[%
\begin{tabular}
[c]{lll}%
$\Pi:N_{r}\left(  B\right)  ^{\ast}R$ & $\longrightarrow$ & $N_{r}\left(
B\right)  ^{\ast}\left(  R/_{w}S\right)  $\\
\multicolumn{1}{r}{$x$} & $\longmapsto$ & $\Pi\left(  x\right)  =x+S$%
\end{tabular}
\]

for all $x\in N_{r}\left(  B\right)  ^{\ast}R$. By using the Definitions
\ref{Df7} and \ref{Df8}, we have that%
\[
\Pi\left(  x+y\right)  =\left(  x+y\right)  +S=\left(  x+S\right)
\oplus\left(  y+S\right)  =\Pi\left(  x\right)  \oplus\Pi\left(  y\right)
\text{,}%
\]%
\[
\Pi\left(  x\cdot y\right)  =\left(  x\cdot y\right)  +S=\left(  x+S\right)
\odot\left(  y+S\right)  =\Pi\left(  x\right)  \odot\Pi\left(  y\right)
\]
for all $x,y\in R$. Hence, $\Pi\ $is a nearness natural homomorphism from
$N_{r}\left(  B\right)  ^{\ast}R$ into $N_{r}\left(  B\right)  ^{\ast}\left(
R/_{w}S\right)  $.
\end{example}

\begin{definition}
\label{Df22}Let $R_{1},R_{2}$ be two nearness rings and $S$ be a non-empty
subset of $R_{1}$. Let%
\[
\chi:N_{r}\left(  B\right)  ^{\ast}R_{1}\longrightarrow N_{r}\left(  B\right)
^{\ast}R_{2}%
\]

be a mapping and%
\[
\chi_{_{S}}=%
\genfrac{}{}{0pt}{1}{\chi}{{}}%
\genfrac{\vert}{.}{0pt}{1}{{}}{S}%
:S\longrightarrow N_{r}\left(  B\right)  ^{\ast}R_{2}%
\]

a restricted mapping. If $\chi\left(  x+y\right)  =\chi_{_{S}}\left(
x+y\right)  =\chi_{_{S}}\left(  x\right)  +\chi_{_{S}}\left(  y\right)
=\chi\left(  x\right)  +\chi\left(  y\right)  $ and $\chi\left(  x\cdot
y\right)  =\chi_{_{S}}\left(  x\cdot y\right)  =\chi_{_{S}}\left(  x\right)
\cdot\chi_{_{S}}\left(  y\right)  =\chi\left(  x\right)  \cdot\chi\left(
y\right)  $ for all $x,y\in S$, then $\chi$ is called a restricted nearness
homomorphism and also, $R_{1}$ is called restricted nearness homomorphic to
$R_{2}$, denoted by $R_{1}\simeq_{rn}R_{2}$.
\end{definition}

\begin{theorem}
Let $R_{1},R_{2}$ be two nearness rings and $\chi$ be a nearness homomorphism
from $N_{r}\left(  B\right)  ^{\ast}R_{1}$ into $N_{r}\left(  B\right)
^{\ast}R_{2}$ . Let $\left(  N_{r}\left(  B\right)  ^{\ast}Ker\chi,+\right)  $
and $\left(  N_{r}\left(  B\right)  ^{\ast}Ker\chi,\cdot\right)  $ be
groupoids and $\left(  N_{r}\left(  B\right)  ^{\ast}R_{1}\right)  /_{\sim}$
be a set of all weak cosets of $N_{r}\left(  B\right)  ^{\ast}R_{1}$ by
$Ker\chi$. If $\left(  N_{r}\left(  B\right)  ^{\ast}R_{1}\right)  /_{\sim
}\subseteq N_{r}\left(  B\right)  ^{\ast}\left(  R_{1}/_{w}Ker\chi\right)  $
and $N_{r}\left(  B\right)  ^{\ast}\chi\left(  R_{1}\right)  =\chi\left(
N_{r}\left(  B\right)  ^{\ast}R_{1}\right)  $, then%
\[
R_{1}/_{w}Ker\chi\simeq_{rn}\chi\left(  R_{1}\right)  \text{.}%
\]

\end{theorem}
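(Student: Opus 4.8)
The plan is to run the classical first isomorphism argument in the nearness setting: build the natural map sending a weak coset to the $\chi$-image of a representative, and then verify it satisfies the chain of equalities demanded by Definition \ref{Df22}. Before that I would record the standing objects. By Theorem \ref{Th03} the set $Ker\chi$ is a nearness ideal of $R_1$, hence (being a nearness ideal) a subnearness ring, so the quotient $R_1/_w Ker\chi$ is meaningful; moreover the hypothesis $\left(N_r(B)^*R_1\right)/_\sim \subseteq N_r(B)^*\left(R_1/_w Ker\chi\right)$ is exactly the premise of Theorem \ref{Th5}, so $R_1/_w Ker\chi$ is a nearness ring. I then define
\[
\Psi : N_r(B)^*\left(R_1/_w Ker\chi\right) \longrightarrow N_r(B)^*\chi(R_1), \qquad \Psi\left(x + Ker\chi\right) = \chi(x),
\]
on cosets of representatives $x \in N_r(B)^*R_1$. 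The inclusion $\left(N_r(B)^*R_1\right)/_\sim \subseteq N_r(B)^*\left(R_1/_w Ker\chi\right)$ places these cosets in the domain, and the hypothesis $N_r(B)^*\chi(R_1) = \chi\left(N_r(B)^*R_1\right)$ places their values $\chi(x)$ in the codomain.

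The first substantive step is well-definedness. Suppose $x + Ker\chi = y + Ker\chi$; then $-x + y \in Ker\chi \cup \{e\}$. If $-x + y \in Ker\chi$, then $\chi(-x+y) = 0_{R_2}$, and since $-x, y \in R_1$ (the near inverse $-x$ lies in $R_1$ by the near-group axiom $(NG_4)$ for $(R_1,+)$), the homomorphism law applies and Theorem \ref{Th6}.(ii) gives $-\chi(x) + \chi(y) = 0_{R_2}$, whence $\chi(x) = \chi(y)$. In the remaining case $-x + y = e$, the additive near identity, cancellation in the abelian near group $(R_1,+)$ forces $y = x$ and again $\chi(x) = \chi(y)$. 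Surjectivity onto $\chi(R_1)$ is immediate, since every element $\chi(x)$ is the image of $x + Ker\chi$.

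It then remains to check the two algebraic laws on $S = R_1/_w Ker\chi$. Using Definition \ref{Df7} and the additive clause of Definition \ref{Df20},
\[
\Psi\big((x + Ker\chi) \oplus (y + Ker\chi)\big) = \Psi\big((x+y) + Ker\chi\big) = \chi(x+y) = \chi(x) + \chi(y) = \Psi(x + Ker\chi) + \Psi(y + Ker\chi),
\]
and using Definition \ref{Df8} with the multiplicative clause of Definition \ref{Df20} the corresponding identity for $\odot$ and $\cdot$ holds. Writing $\Psi_S$ for the restriction of $\Psi$ to $S$, these are precisely the equalities $\Psi(u \oplus v) = \Psi_S(u \oplus v) = \Psi_S(u) + \Psi_S(v) = \Psi(u) + \Psi(v)$ and its multiplicative analogue required in Definition \ref{Df22}. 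Hence $\Psi$ is a restricted nearness homomorphism and $R_1/_w Ker\chi \simeq_{rn} \chi(R_1)$.

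The step I expect to be the main obstacle is well-definedness, for two reasons special to the nearness framework. First, the weak-coset relation carries the extra term $\cup \{e\}$, so the degenerate possibility $-x + y = e$ must be treated separately rather than absorbed into $Ker\chi$. Second, to push $\chi$ through $-x + y$ one needs $-x$ and $y$ to lie in $R_1$ itself, because the homomorphism law of Definition \ref{Df20} is asserted only on $R_1$ and not on all of $N_r(B)^*R_1$; this is exactly where $(NG_4)$ for $(R_1,+)$ and Theorem \ref{Th6}.(ii) do the work. The only remaining subtlety is confirming that $\Psi$ lands in $N_r(B)^*\chi(R_1)$ and not merely in $N_r(B)^*R_2$, which is precisely what the hypothesis $N_r(B)^*\chi(R_1) = \chi\left(N_r(B)^*R_1\right)$ secures.
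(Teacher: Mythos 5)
Your proposal is correct in substance and follows essentially the same route as the paper: Theorem \ref{Th03} (plus the fact that nearness ideals are subnearness rings) and Theorem \ref{Th5} to make $R_{1}/_{w}Ker\chi$ a nearness ring, the natural map $x+Ker\chi\longmapsto\chi\left(  x\right)  $, well-definedness via the kernel, and verification of the additive and multiplicative laws of Definition \ref{Df22}. There is one point where the paper does strictly more than you do. Definition \ref{Df22} requires the ambient map to be a mapping on \emph{all} of $N_{r}\left(  B\right)  ^{\ast}\left(  R_{1}/_{w}Ker\chi\right)  $, and by Definition \ref{Df9} this upper approximation is a union of descriptive nearness collections $\xi_{\Phi}\left(  A\right)  $ with $A\in\mathcal{P}\left(  \mathcal{O}\right)  $, so it will in general contain subsets of $\mathcal{O}$ that are not weak cosets at all; your $\Psi$ is defined only on $\left(  N_{r}\left(  B\right)  ^{\ast}R_{1}\right)  /_{\sim}$ and is silent on the rest of its declared domain, so as written it is a partial map. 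The paper closes this by the two-case definition: $\eta\left(  A\right)  =\eta_{_{R_{1}/_{w}Ker\chi}}\left(  A\right)  $ when $A\in\left(  N_{r}\left(  B\right)  ^{\ast}R_{1}\right)  /_{\sim}$ and $\eta\left(  A\right)  =e_{\chi\left(  R_{1}\right)  }$ otherwise --- a trivial constant extension, but necessary for $\eta$ to be total and hence to satisfy Definition \ref{Df22} literally; with that one-line patch your argument coincides with the paper's. On the other side of the ledger, your well-definedness discussion (separating the degenerate case $-x+y=e$, and observing that the homomorphism law of Definition \ref{Df20} can only be applied because $-x$ and $y$ lie in $R_{1}$ itself, via $(NG_{4})$ and Theorem \ref{Th6}.(ii)) is if anything more explicit than the paper's corresponding computation.
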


\begin{proof}
Since $\left(  N_{r}\left(  B\right)  ^{\ast}Ker\chi,+\right)  $ and $\left(
N_{r}\left(  B\right)  ^{\ast}Ker\chi,\cdot\right)  $ are groupoids, from
Theorem \ref{Th03} $Ker\chi$ is a subnearness ring of $R_{1}$. Since $Ker\chi$
is a subnearness ring of $R_{1}$ and $\left(  N_{r}\left(  B\right)  ^{\ast
}R_{1}\right)  /_{\sim}$ $\subseteq N_{r}\left(  B\right)  ^{\ast}\left(
R_{1}/_{w}Ker\chi\right)  $, then $R_{1}/_{w}Ker\chi$ is a nearness ring of
all weak cosets of $R_{1}$ by $Ker\chi$ from Theorem \ref{Th5}. Since
$N_{r}\left(  B\right)  ^{\ast}\chi\left(  R_{1}\right)  =\chi\left(
N_{r}\left(  B\right)  ^{\ast}R_{1}\right)  $, $\chi\left(  R_{1}\right)  $ is
a subnearness ring of $R_{2}$. Define%
\small{
\[%
\begin{tabular}
[c]{lll}%
$\eta:N_{r}\left(  B\right)  ^{\ast}\left(  R_{1}/_{w}Ker\chi\right)  $ &
$\longrightarrow$ & $N_{r}\left(  B\right)  ^{\ast}\chi\left(  R_{1}\right)
$\\
\multicolumn{1}{r}{$A$} & $\longmapsto$ & $\eta(A)=\left\{
\begin{tabular}
[c]{ll}%
$\eta_{_{R_{1}/_{w}Ker\chi}}\left(  A\right)  $ & $,A\in\left(  N_{r}\left(
B\right)  ^{\ast}R_{1}\right)  /_{\sim}$\\
$e_{\chi\left(  R_{1}\right)  }$ & $,A\notin\left(  N_{r}\left(  B\right)
^{\ast}R_{1}\right)  /_{\sim}$%
\end{tabular}
\ \ \ \ \right.  $%
\end{tabular}
\ \ \ \
\]
}
\normalsize
where%
\[%
\begin{tabular}
[c]{lll}%
$\eta_{_{R_{1}/_{w}Ker\chi}}:%
\genfrac{}{}{0pt}{1}{\eta}{{}}%
\genfrac{\vert}{.}{0pt}{1}{{}}{R_{1}/_{w}Ker\chi}%
$ & $\longrightarrow$ & $N_{r}\left(  B\right)  ^{\ast}\chi\left(
R_{1}\right)  $\\
\multicolumn{1}{r}{$x+Ker\chi$} & $\longmapsto$ & $\eta_{_{R_{1}/_{w}Ker\chi}%
}\left(  x+Ker\chi\right)  =\chi\left(  x\right)  $%
\end{tabular}
\ \ \ \
\]

for all $x+Ker\chi\in R_{1}/_{w}Ker\chi$.

Since%
\begin{align*}
x+Ker\chi &  =\left\{  x+k\mid k\in Ker\chi,x+k\in R_{1}\right\}  \cup\left\{
x\right\}  \text{,}\\
y+Ker\chi &  =\left\{  y+k^{\prime}\mid k^{\prime}\in Ker\chi,y+k^{\prime}\in
R_{1}\right\}  \cup\left\{  y\right\}  \text{,}%
\end{align*}

and the mapping $\chi$ is a nearness homomorphism,%
\[%
\begin{tabular}
[c]{rl}
& $x+Ker\chi=y+Ker\chi$\\
$\Rightarrow$ & $x\in y+Ker\chi$\\
$\Rightarrow$ & $x\in\left\{  y+k^{\prime}\mid k^{\prime}\in Ker\chi
,y+k^{\prime}\in R_{1}\right\}  $ or $x\in\left\{  y\right\}  $\\
$\Rightarrow$ & $x=y+k^{\prime},$ $k^{\prime}\in Ker\chi,$ $y+k^{\prime}\in
R_{1}$ or $x=y$\\
$\Rightarrow$ & $-y+x=\left(  -y+y\right)  +k^{\prime},$ $k^{\prime}\in
Ker\chi$ or $\chi\left(  x\right)  =\chi\left(  y\right)  $\\
$\Rightarrow$ & $-y+x=k^{\prime},$ $k^{\prime}\in Ker\chi$\\
$\Rightarrow$ & $-y+x\in Ker\chi$\\
$\Rightarrow$ & $\chi\left(  -y+x\right)  =e_{\chi\left(  R_{1}\right)  }$\\
$\Rightarrow$ & $\chi\left(  -y\right)  +\chi\left(  x\right)  =e_{\chi\left(
R_{1}\right)  }$\\
$\Rightarrow$ & $-\chi\left(  y\right)  +\chi\left(  x\right)  =e_{\chi\left(
R_{1}\right)  }$\\
$\Rightarrow$ & $\chi\left(  x\right)  =\chi\left(  y\right)  $\\
$\Rightarrow$ & $\eta_{_{R_{1}/_{w}Ker\chi}}\left(  x+Ker\chi\right)
=\eta_{_{R_{1}/_{w}Ker\chi}}\left(  y+Ker\chi\right)  $%
\end{tabular}
\ \ \ \
\]

Therefore $\eta_{_{R_{1}/_{w}Ker\chi}}$ is well defined.

For $A,B\in N_{r}\left(  B\right)  ^{\ast}\left(  R_{1}/_{w}Ker\chi\right)$,
we suppose that $A=B$. Since the mapping $\eta_{_{R_{1}/_{w}Ker\chi}}$ is well
defined,%
\[%
\begin{tabular}
[c]{ll}%
$\eta \left(  A\right) $ & $=\left\{
\begin{tabular}
[c]{ll}%
$\eta_{_{R_{1}/_{w}Ker\chi}}\left(  A\right) $ & ,$A\in\left(  N_{r}\left(
B\right)  ^{\ast}R_{1}\right)  /_{\sim}$\\
$e_{\chi\left(  R_{1}\right)  } $& $,A\notin\left(  N_{r}\left(  B\right)
^{\ast}R_{1}\right)  /_{\sim}$%
\end{tabular}
\ \ \ \ \right. $\\
& \\
& $=\left\{
\begin{tabular}
[c]{ll}%
$\eta_{_{R_{1}/_{w}Ker\chi}}\left(  B\right)$   & ,$B\in\left(  N_{r}\left(
B\right)  ^{\ast}R_{1}\right)  /_{\sim}$\\
$e_{\chi\left(  R_{1}\right) } $ & $,B\notin\left(  N_{r}\left(  B\right)
^{\ast}R_{1}\right)  /_{\sim}$%
\end{tabular}
\ \ \ \ \right. $ \\
& \\
& $=\eta\left(  B\right) $.
\end{tabular}
\ \ \ \
\]

Consequently $\eta$ is well defined.

For all $x+Ker\chi,y+Ker\chi\in R_{1}/_{w}Ker\chi\subset N_{r}\left(
B\right)  ^{\ast}\left(  R_{1}/_{w}Ker\chi\right)  $,%
\[%
\begin{tabular}
[c]{ll}
& $\eta\left(  \left(  x+Ker\chi\right)  \oplus\left(  y+Ker\chi\right)
\right)  $\\
$=$ & $\eta_{_{R_{1}/_{w}Ker\chi}}\left(  \left(  x+Ker\chi\right)
\oplus\left(  y+Ker\chi\right)  \right)  $\\
$=$ & $\eta_{_{R_{1}/_{w}Ker\chi}}\left(  \left(  x+y\right)  +Ker\chi\right)
$\\
$=$ & $\chi\left(  x+y\right)  $\\
$=$ & $\chi\left(  x\right)  +\chi\left(  y\right)  $\\
$=$ & $\eta_{_{R_{1}/_{w}Ker\chi}}\left(  x+Ker\chi\right)  +\eta
_{_{R_{1}/_{w}Ker\chi}}\left(  y+Ker\chi\right)  $\\
$=$ & $\eta\left(  x+Ker\chi\right)  +\eta\left(  y+Ker\chi\right)  $.
\end{tabular}
\ \ \ \
\]

and%

\[%
\begin{tabular}
[c]{ll}
& $\eta\left(  \left(  x+Ker\chi\right)  \odot\left(  y+Ker\chi\right)
\right)  $\\
$=$ & $\eta_{_{R_{1}/_{w}Ker\chi}}\left(  \left(  x+Ker\chi\right)
\odot\left(  y+Ker\chi\right)  \right)  $\\
$=$ & $\chi_{_{R_{1}/_{w}Ker\chi}}\left(  \left(  x\cdot y\right)
+Ker\chi\right)  $\\
$=$ & $\chi\left(  x\cdot y\right)  $\\
$=$ & $\chi\left(  x\right)  \cdot\chi\left(  y\right)  $\\
$=$ & $\eta_{_{R_{1}/_{w}Ker\chi}}\left(  x+Ker\chi\right)  \cdot\eta
_{_{R_{1}/_{w}Ker\chi}}\left(  y+Ker\chi\right)  $\\
$=$ & $\eta\left(  x+Ker\chi\right)  \cdot\eta\left(  y+Ker\chi\right)  $.
\end{tabular}
\ \
\]

Therefore $\eta$ is a restricted nearness homomorphism by Definition
\ref{Df22}. Hence, $R_{1}/_{w}Ker\chi\simeq_{rn}\chi\left(  R_{1}\right)  $.
\end{proof}


\begin{thebibliography}{99}    
                                                                                           %
\bibitem {Biswas}Biswas R., Nanda S., Rough groups and rough subgroups,
\emph{Bull. Pol. AC. Math.}, \textbf{42}, 1994, 251-254.

\bibitem {Clifford1961}Clifford A., Preston G., The Algebraic Theory of
Semigroups I, \emph{Amer Math Soc}, Providence, RI, 1961, Mathematical Surveys.

\bibitem {Davvaz2004}Davvaz B., Roughness in rings, Inform. Sci.,
\textbf{164}, 2004, 147--163.

\bibitem {Iwinski}Iwinski T. B., Algebraic approach to rough
sets, \emph{Bull. Pol. AC. Math.} , \textbf{35}, 1987, 673--683.


\bibitem {Inan}\.{I}nan, E., \"{O}zt\"{u}rk, M.~A.: Near groups on nearness
approximation spaces, \emph{Hacet J Math Stat}, \textbf{41}(4), 2012, 545--558.

\bibitem {Inan2014}\.{I}nan, E., \"{O}zt\"{u}rk, M.~A.: Erratum and notes for
near groups on nearness approximation spaces, \emph{Hacet J Math Stat},
\textbf{43}(2), 2014, 279--281.

\bibitem {Inansemi} \.{I}nan, E., \"{O}zt\"{u}rk, M.~A.: Near semigroups on nearness approximation spaces, \emph{Ann Fuzzy Math Inform}, \textbf{10}(2), 2015, 287--297.

\bibitem {Naimpally}Naimpally, S.~A., Peters, J.~F.: Topology with
Applications, Topological Spaces via Near and Far, \emph{World Scientific}, 2013.

\bibitem {Ozturk2013}\"{O}zt\"{u}rk, M.~A., U\c{c}kun, M., \.{I}nan, E.: Near
group of weak cosets on nearness approximation spaces, \emph{Fund Inform}, \textbf{133}, 2014, 433--448.

\bibitem {Peters}Peters, J.~F.: Near sets, General theory about nearness of
objects, \emph{Appl Math Sci}, \textbf{1}(53--56), 2007, 2609--2629.

\bibitem {Peters0}Peters, J.~F.: Near sets: An introduction, \emph{Math Comput
Sci.}, DOI\ 10.1007/s11786-013-0149-6, 2013, 3--9.

\bibitem {Peters1}Peters, J.~F.: Near sets, Special theory about nearness of
objects, \emph{Fund Inform}, \textbf{75}(1--4), 2007, 407--433.

\bibitem {Peters5}Peters, J.~F, Tiwari, S.: Approach merotopies and near
filters, \emph{Gen Math Notes}, \textbf{3}(1), 2011, 1-15.

\bibitem {Peters6}Peters, J.~F, Naimpally, S.~A.: Applications of near sets,
\emph{Notices Amer Math Soc}, \textbf{59}(4), 2012, 536--542.

\bibitem {Skowron}Skowron A., Stepaniuk J. , Tolerance Approximation Spaces,
\emph{Fund Inform}, \textbf{27} (2-3), 1996, 245-253.

\bibitem {Rasouli}Rasouli S., Davvaz B., Roughness in MV-algebras,
\emph{Inform. Sci.}, \textbf{180}, 2010, 737-747.

\bibitem {Yamak}Yamak S., Kazanc\i\ O., Davvaz B., Generalized lower and upper
approximations in a ring, \emph{Inform. Sci.}, \textbf{180}, 2010, 1759-1768.
\end{thebibliography}
\end{document}